\numberwithin{equation}{section}
\newtheorem{theorem}{Theorem}[section]
\newtheorem*{theorem*}{Theorem}
\newtheorem{lemma}[theorem]{Lemma}
\newtheorem*{lemma*}{Lemma}
\newtheorem{proposition}[theorem]{Proposition}
\newtheorem{corollary}[theorem]{Corollary}
\newtheorem{claim}[theorem]{Claim}
\theoremstyle{definition}
\newtheorem{definition}[theorem]{Definition}
\newtheorem{remark}[theorem]{Remark}
\DeclarePairedDelimiter{\abs}{\lvert}{\rvert}
\DeclarePairedDelimiter{\norm}{\lVert}{\rVert}
\DeclarePairedDelimiter{\paren}{(}{)}
\DeclarePairedDelimiter{\braces}{\lbrace}{\rbrace}
\DeclarePairedDelimiter{\inprod}{\langle}{\rangle}
\DeclarePairedDelimiter{\sqbracket}{[}{]}
\DeclarePairedDelimiter{\tripnorm}{\lvert\kern-0.25ex\lvert\kern-0.25ex\lvert}{\rvert\kern-0.25ex\rvert\kern-0.25ex\rvert}
\renewcommand{\P}{\mathbb{P}}
\newcommand{\E}{\mathbb{E}}
\newcommand{\R}{\mathbb{R}}
\newcommand{\C}{\mathbb{C}}
\newcommand{\Z}{\mathbb{Z}}
\newcommand{\cross}{\times}
\newlength{\dhatheight}
\newcommand{\st}{\ | \ }
\newcommand{\sign}{\textnormal{sign}}
\newcommand{\Tr}{\textnormal{Tr}}
\newcommand{\diag}{\textnormal{diag}}
\newcommand{\diam}{\textnormal{diam}}
\newcommand{\ACW}{\textnormal{ACW}}
\title{Phase Retrieval via Randomized Kaczmarz: Theoretical Guarantees}
\author{Yan Shuo Tan \footnote{Department of Mathematics, University of Michigan, \href{mailto:yanshuo@umich.edu}{yanshuo@umich.edu}.}  \quad\quad\quad Roman Vershynin \footnote{Department of Mathematics, University of California, Irvine, \href{mailto:rvershyn@uci.edu}{rvershyn@uci.edu}.}}
\begin{document}
	
\maketitle

\begin{abstract}
We consider the problem of phase retrieval, i.e. that of solving systems of quadratic equations. 
A simple variant of the randomized Kaczmarz method was recently proposed for phase retrieval, and it was shown numerically to have a computational edge over state-of-the-art Wirtinger flow methods. In this paper, we provide the first theoretical guarantee for the convergence of the randomized Kaczmarz method for phase retrieval. We show that it is sufficient to have as many Gaussian measurements as the dimension, up to a constant factor. Along the way, we introduce a sufficient condition on measurement sets for which the randomized Kaczmarz method is guaranteed to work. We show that Gaussian sampling vectors satisfy this property with high probability; this is proved using a chaining argument coupled with bounds on VC dimension and metric entropy.
\end{abstract}

\section{Introduction}

The phase retrieval problem is that of solving a system of quadratic equations
\begin{equation}	\label{eq: phase retrieval}
\abs{\inprod{a_i,z}^2} = b_i^2, \quad\quad i = 1,2,\ldots,m
\end{equation}
where $a_i \in \R^n$ (or $\C^n$) are known sampling vectors,
$b_i > 0$ are observed measurements, and $z \in \R^n$ (or $\C^n$) is the decision variable. This problem is well motivated by practical concerns \cite{Fienup1982} and has been a topic of study from at least the early 1980s.

Over the last half a decade, there has been great interest in constructing and analyzing algorithms with provable guarantees given certain classes of sampling vector sets. One line of research involves ``lifting'' the quadratic system to a linear system, which is then solved using convex relaxation ({\em PhaseLift}) \cite{Cand??s2013}. A second method is to formulate and solve a linear program in the natural parameter space using an anchor vector ({\em PhaseMax}) \cite{Goldstein2016,Bahmani2016a,Hand2016a}. Although both of these methods can be proved to have near optimal sample efficiency, the most empirically successful approach has been to directly optimize various naturally-formulated non-convex loss functions, the most notable of which are displayed in Table \ref{fig: loss functions}.

\begin{table}
	\centering
	\begin{tabular}{| c | c | c |}
		\hline
		Loss function & Name & Papers \\
		\hline
		$\displaystyle{f(z) = \sum_{i=1}^m \paren{\abs{a_i,z}^2-b_i^2}^2}$ & Wirtinger Flow objective & \cite{Candes2015,Sun2016} \\
		\hline
		$\displaystyle{f(z) = \sum_{i=1}^m \paren{\abs{a_i,z}-b_i}^2}$ & Amplitude Flow objective &\cite{Wang2017,Zhang2016} \\
		\hline
		$\displaystyle{f(z) = \sum_{i=1}^m \abs*{\abs{a_i,z}^2-b_i^2}}$ & Robust Wirtinger Flow objective &\cite{Eldar2014,Duchi2017,Davis2017} \\
		\hline
	\end{tabular}
	\caption{Non-convex loss functions for phase retrieval}
	\label{fig: loss functions}
\end{table}

These loss functions enjoy nice properties which make them amenable to various optimization schemes \cite{Sun2016,Eldar2014}. Those with provable guarantees include the prox-linear method of \cite{Duchi2017}, and various gradient descent methods \cite{Candes2015,Chen2015,Wang2017,Zhang2016,Davis2017}. Some of these methods also involve adaptive measurement pruning to enhance performance.

In 2015, Wei \cite{Wei2015} proposed adapting a family of randomized Kaczmarz methods for solving the phase retrieval problem. He was able to show using numerical experiments that these methods perform comparably with state-of-the-art Wirtinger flow (gradient descent) methods when the sampling vectors are real or complex Gaussian, or when they follow the coded diffraction pattern (CDP) model \cite{Candes2015}. He also showed that randomized Kaczmarz methods outperform Wirtinger flow when the sampling vectors are the concatenation of a few unitary bases. Unfortunately, \cite{Wei2015} was not able to provide adequate theoretical justification for the convergence of these methods (see Theorem 2.6 in \cite{Wei2015}).

In this paper, we attempt to bridge this gap by showing that the basic randomized Kaczmarz scheme used in conjunction with truncated spectral initialization achieves {\em linear convergence} to the solution with high probability, whenever the sampling vectors are drawn uniformly from the sphere\footnote{This is essentially equivalent to being real Gaussian because of the concentration of norm phenomenon in high dimensions. Also, one may normalize vectors easily.} $S^{n-1}$ and the number of measurements $m$ is larger than a constant times the dimension $n$.

It is also interesting to note that the basic randomized Kaczmarz scheme is exactly \textit{stochastic gradient descent} for the Amplitude Flow objective, which suggests that other gradient descent schemes can also be accelerated using stochasticity.

\subsection{Randomized Kaczmarz for solving linear systems}

The Kaczmarz method is a fast iterative method for solving systems of overdetermined linear equations that works by iteratively satisfying one equation at a time. In 2009, Strohmer and Vershynin \cite{Strohmer2009} were able to give a provable guarantee on its rate of convergence, provided that the equation to be satisfied at each step is selected using a prescribed randomized scheme.

Suppose our system to be solved is given by
\begin{equation} \label{eq: Ax = b}
Ax = b,
\end{equation}
where $A$ is an $m$ by $n$ matrix. Denoting the rows of $A$ by $a_1^T,\ldots,a_m^T$,
we can write \eqref{eq: Ax = b} as the system of linear equations
$$
\inprod{a_i,x} = b_i, \quad i=1,\ldots,m.
$$
The solution set of each equation is a hyperplane. 
The randomized Kaczmarz method is a simple iterative algorithm in which we {\em project the running approximation onto the hyperplane of a randomly chosen equation}. More formally, at each step $k$ we randomly choose an index $r(k)$ from $[m]$ such that the probability that $r(k) = i$ is proportional to $\norm{a_i}_2^2$, and update the running approximation as follows:
\begin{equation*}
x_k := x_{k-1} + \frac{b_{r(k)} - \inprod{a_{r(k)},x_{k-1}}}{\norm{a_{r(k)}}_2^2}a_{r(k)}.
\end{equation*}

Strohmer and Vershynin \cite{Strohmer2009} were able to prove the following theorem:

\begin{theorem}[Linear convergence for linear systems] \label{thm: linear convergence for linear systems}
	Let $\kappa(A) = \norm{A}_F / \sigma_{\min}(A)$. Then for any initialization $x_0$ to the equation \eqref{eq: Ax = b}, the estimates given to us by randomized Kaczmarz satisfy
	\begin{equation*}
	\E\norm{x_k-x}_2^2 \leq \paren*{1-\kappa(A)^{-2}}^k \norm{x_0 -x}_2^2.
	\end{equation*}
\end{theorem}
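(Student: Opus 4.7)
The plan is to exploit the geometric interpretation of each randomized Kaczmarz update as an orthogonal projection, combine it with the Pythagorean theorem, and then carefully average over the random choice of row using the prescribed weighting.

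To begin, I would note that since $x$ solves \eqref{eq: Ax = b}, it lies on every hyperplane $H_i = \braces{y : \inprod{a_i, y} = b_i}$, while the update rule is exactly the formula for the orthogonal projection of $x_{k-1}$ onto $H_{r(k)}$. Therefore $x_k - x$ and $x_k - x_{k-1}$ are orthogonal, and Pythagoras gives $\norm{x_{k-1} - x}_2^2 = \norm{x_k - x_{k-1}}_2^2 + \norm{x_k - x}_2^2$, with the projection length equal to $\norm{x_k - x_{k-1}}_2^2 = \abs{\inprod{a_{r(k)}, x_{k-1} - x}}^2 / \norm{a_{r(k)}}_2^2$.

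Next I would take the conditional expectation over $r(k)$. The sampling probabilities $\P\braces{r(k) = i} = \norm{a_i}_2^2 / \norm{A}_F^2$ are designed precisely so that the factors $\norm{a_i}_2^2$ in the denominator cancel, giving
$$\E_{r(k)} \norm{x_k - x_{k-1}}_2^2 = \sum_{i=1}^m \frac{\norm{a_i}_2^2}{\norm{A}_F^2} \cdot \frac{\abs{\inprod{a_i, x_{k-1} - x}}^2}{\norm{a_i}_2^2} = \frac{\norm{A(x_{k-1} - x)}_2^2}{\norm{A}_F^2}.$$
I would then bound this below using $\norm{A(x_{k-1} - x)}_2^2 \geq \sigma_{\min}(A)^2 \norm{x_{k-1} - x}_2^2$. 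Substituting into the Pythagorean identity yields the one-step contraction $\E\sqbracket{\norm{x_k - x}_2^2 \mid x_{k-1}} \leq (1 - \kappa(A)^{-2}) \norm{x_{k-1} - x}_2^2$, and iterating over $k$ via the tower property gives the stated bound.

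Conceptually, the main ``obstacle'' is not technical but design-level: one must recognize that the row-norm-squared weighting is exactly what is needed to telescope the averaged projection length into the clean quantity $\norm{A(x_{k-1} - x)}_2^2 / \norm{A}_F^2$; without this specific choice of distribution one does not recover the Frobenius-to-minimum-singular-value ratio defining $\kappa(A)$. The only subtlety to flag is the application of $\sigma_{\min}(A)$: one needs $A$ to have full column rank, or equivalently to ensure the error $x_{k-1} - x$ lies in the row span of $A$, which holds along the iteration whenever $x_0$ is chosen in that subspace since the updates only move in it.
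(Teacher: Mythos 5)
Your proof is correct and is essentially the canonical Strohmer--Vershynin argument: the paper does not reprove this theorem but cites \cite{Strohmer2009}, whose proof is exactly this combination of the Pythagorean identity for the orthogonal projection onto $H_{r(k)}$, the cancellation of $\norm{a_i}_2^2$ under the row-norm-squared sampling to produce $\norm{A(x_{k-1}-x)}_2^2/\norm{A}_F^2$, the bound via $\sigma_{\min}(A)$, and iteration by the tower property. Your closing remark about needing full column rank (so that $\sigma_{\min}(A)>0$ and the contraction is nontrivial) is the right caveat and matches the overdetermined setting assumed in the paper.
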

Note that if $A$ has bounded condition number, then $\kappa(A) \asymp 1/\sqrt{n}$.

\subsection{Randomized Kaczmarz for phase retrieval}

In the phase retrieval problem \eqref{eq: phase retrieval}, each equation 
$$
\abs{\inprod{a_i,x}} = b_i
$$
defines two hyperplanes, one corresponding to each of $\pm x$.
A natural adaptation of the randomized Kaczmarz update for this situation is then to project the running approximation to the \textit{closer hyperplane}. We restrict to the case where each measurement vector $a_i$ has unit norm, so that in equations, this is given by
\begin{equation}  \label{def: randomized Kaczmarz phase retrieval}
x_k := x_{k-1} + \eta_k a_{r(k)},
\end{equation} 
where
\begin{equation*}
\eta_k = \sign(\inprod{a_{r(k)},x_{k-1}}) b_{r(k)} - \inprod{a_{r(k)},x_{k-1}}.
\end{equation*}

In order to obtain a convergence guarantee for this algorithm, we need to choose $x_0$ so that it is close enough to the signal vector $x$. This is unlike the case for linear systems where we could start with an arbitrary initial estimate $x_0 \in \R^n$, but the requirement is par for the course for phase retrieval algorithms. Unsurprisingly, there is a rich literature on how to obtain such estimates \cite{Cand??s2013,Chen2015,Zhang2016,Wang2017}. The best methods are able to obtain a good initial estimate using $O(n)$ samples.

\subsection{Contributions and main result}

The main result of our paper guarantees the linear convergence of randomized Kaczmarz algorithm for phase retrieval for random measurements $a_i$ that are drawn independently and uniformly from the unit sphere.

\begin{theorem}[Convergence guarantee for algorithm] \label{thm: guarantee for algorithm}
	Fix $\epsilon > 0$, $0 < \delta_1 \leq 1/2$, and $0 < \delta,\delta_2 \leq 1$. There are absolute constants $C, c > 0$ such that if
	\begin{equation*}
	m \geq C(n\log(m/n) + \log(1/\delta)),
	\end{equation*}
	then with probability at least $1-\delta$, $m$ sampling vectors selected uniformly and independently from the unit sphere $S^{n-1}$ form a set such that the following holds: Let $x \in \R^n$ be a signal vector and let $x_0$ be an initial estimate satisfying $\norm{x_0-x}_2 \leq c\sqrt{\delta_1}\norm{x}_2$. Then for any $\epsilon > 0$, if
	\begin{equation*}
	K \geq 2(\log(1/\epsilon) + \log(2/\delta_2))n,
	\end{equation*}
	then the $K$-th step randomized Kaczmarz estimate $x_K$ satisfies $\norm{x_K-x}_2^2 \leq \epsilon \norm{x_0 - x}_2^2$ with probability at least $1-\delta_1-\delta_2$.
\end{theorem}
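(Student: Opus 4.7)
The plan is to separate three tasks: a one-step analysis giving expected contraction under a deterministic regularity condition on $\{a_i\}$, a chaining argument showing the condition holds for random spherical vectors, and a supermartingale step converting one-step in-expectation contraction into a trajectory-wise high-probability bound.

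First I would analyze one step. Write $e_k := x_k - x$, the global $\pm$ ambiguity resolved by taking $\|x_0-x\|_2 \leq \|x_0+x\|_2$. For each $i \in [m]$ set $s_i := \sign(\inprod{a_i,x})$ and, at step $k$, $\tilde s_i := \sign(\inprod{a_i,x_{k-1}})$. A direct calculation from \eqref{def: randomized Kaczmarz phase retrieval} shows that on the ``good'' event $s_{r(k)} = \tilde s_{r(k)}$ the update is the orthogonal projection onto the hyperplane through $x$ and $\|e_k\|_2^2 = \|e_{k-1}\|_2^2 - \inprod{a_{r(k)},e_{k-1}}^2$, while on the ``sign-flip'' event $s_{r(k)} \neq \tilde s_{r(k)}$ one has $\|e_k\|_2^2 = \|e_{k-1}\|_2^2 - \inprod{a_{r(k)},e_{k-1}}^2 + 4\inprod{a_{r(k)},x}^2$. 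Averaging over $r(k)$ drawn uniformly from $[m]$,
\begin{equation*}
\E\sqbracket*{\|e_k\|_2^2 \st x_{k-1}} = \|e_{k-1}\|_2^2 - \frac{1}{m}\sum_{i=1}^m \inprod{a_i,e_{k-1}}^2 + \frac{4}{m}\sum_{i:\, s_i \neq \tilde s_i}\inprod{a_i,x}^2.
\end{equation*}
A sign flip at index $i$ forces $\abs{\inprod{a_i,e_{k-1}}} > \abs{\inprod{a_i,x}}$, so the last sum is dominated by the ``flip energy''
\begin{equation*}
F(x,e) := \frac{1}{m}\sum_{i=1}^m \inprod{a_i,x}^2 \, \mathbf{1}\braces*{\abs{\inprod{a_i,e}} > \abs{\inprod{a_i,x}}}.
\end{equation*}

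Next I would extract the two geometric properties of $\{a_i\}$ needed to make the expectation above at most $(1-c/n)\|e_{k-1}\|_2^2$ whenever $\|e_{k-1}\|_2 \leq \rho\|x\|_2$ for a small absolute constant $\rho$: (a) a Marchenko--Pastur-type lower isometry bound $\frac{1}{m}\sum_i \inprod{a_i,v}^2 \geq c_1\|v\|_2^2/n$ for every $v \in \R^n$, and (b) a \emph{uniform} flip-energy bound $F(x,e) \leq c_2\|e\|_2^2/n$ for all $(x,e)$ with $\|e\|_2 \leq \rho\|x\|_2$. Property (a) is standard via an $\epsilon$-net argument at $m \gtrsim n$. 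Property (b) -- which I expect corresponds to the paper's ACW condition -- is the crux and the main obstacle. Pointwise, for a fixed pair each indicator has probability of order $\|e\|_2/\|x\|_2$ by spherical anti-concentration while each summand is $O(1/n)$, so $\E F(x,e) \lesssim \|e\|_2\|x\|_2/n \lesssim \|e\|_2^2/n$ on the regime of interest. The hard work is uniformizing over all $(x,e)$: I would run a chaining argument combining a VC-dimension bound (of order $n$) for the half-space set system $\braces*{a : \abs{\inprod{a,e}} > \abs{\inprod{a,x}}}$ at small scales with a volumetric $\epsilon$-net at large scales, and sum the chain. This is precisely the VC/metric-entropy step flagged in the abstract, and it is what forces the sample complexity $m \gtrsim n\log(m/n) + \log(1/\delta)$.

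Finally, conditioning on the event of probability $\geq 1-\delta$ that the realized $\{a_i\}$ satisfy both (a) and (b), I would convert the one-step contraction into a trajectory-wise bound. Because the contraction only holds in the ``good regime'' $\{\|e\|_2 \leq \rho\|x\|_2\}$, I introduce the stopping time $\tau := \inf\braces*{k : \|e_k\|_2 > \rho\|x\|_2}$ and the rescaled stopped process $W_k := (1-c/n)^{-(k\wedge\tau)}\|e_{k\wedge\tau}\|_2^2$, which is a nonnegative supermartingale by the one-step bound. Doob's maximal inequality then gives
\begin{equation*}
\P\braces*{\tau < \infty} \leq \P\braces*{\sup_{k\geq 0} W_k > \rho^2\|x\|_2^2} \leq \frac{\|e_0\|_2^2}{\rho^2\|x\|_2^2} \leq \frac{c_0^2\delta_1}{\rho^2},
\end{equation*}
so choosing the initialization constant $c_0$ (called $c$ in the theorem) small enough forces $\tau = \infty$ with probability $\geq 1-\delta_1$. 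On that event, iterated expectation yields $\E\|e_K\|_2^2 \leq (1-c/n)^K\|e_0\|_2^2$; choosing $K \geq 2n(\log(1/\epsilon) + \log(2/\delta_2))$ and applying Markov's inequality then gives $\|e_K\|_2^2 \leq \epsilon\|e_0\|_2^2$ with further probability $\geq 1-\delta_2$. A union bound produces the claimed $1-\delta_1-\delta_2$ success probability.
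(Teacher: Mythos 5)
Your overall architecture is the same as the paper's: a one-step identity splitting the good (correct projection) and sign-flip events, a deterministic uniform regularity condition on $\{a_i\}$ (your (a)+(b) play the role of the paper's $\ACW$ condition) established by VC bounds plus chaining, and a stopped nonnegative supermartingale with Doob's maximal inequality followed by Markov. The one-step identity, the stopping-time construction, and the final bookkeeping with $\delta_1,\delta_2$ are all correct.

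There is, however, one step whose justification as written does not work, and it sits at the heart of the argument: your pointwise estimate for the flip energy. You write $\E F(x,e) \lesssim \norm{e}_2\norm{x}_2/n \lesssim \norm{e}_2^2/n$ ``on the regime of interest,'' but in that regime $\norm{e}_2 \leq \rho\norm{x}_2$, so $\norm{e}_2\norm{x}_2 \geq \norm{e}_2^2/\rho \geq \norm{e}_2^2$ --- the second inequality points the wrong way. If $\E F \asymp \norm{e}_2\norm{x}_2/n$ were the truth, then $4F$ would exceed the gain $\frac{1}{m}\sum_i\inprod{a_i,e}^2 \approx \norm{e}_2^2/n$ by a factor of order $\norm{x}_2/\norm{e}_2 \geq 1/\rho$, and the scheme would not contract at all. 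The estimate must be sharpened: on the flip event one has $\abs{\inprod{a_i,x}} \leq \abs{\inprod{a_i,e}}$, and the flip set is a wedge of angle $\theta \lesssim \norm{e}_2/\norm{x}_2 \leq \rho$, on which the conditional energy in any fixed direction is itself proportional to $\theta$; the paper's explicit computation gives $\E\sqbracket{\inprod{a,\tilde e}^2 1_W(a)} \leq (\theta+\sin\theta)/(n\pi)$, whence $\E F \lesssim \rho\norm{e}_2^2/n$. It is exactly this extra factor of $\theta$ (not mere rarity of the flip event times a typical summand) that makes the error term a small fraction of the gain, and it is the quantity that must then be uniformized over wedges --- which is what the paper's $\ACW(\theta,\alpha)$ condition encodes via $\lambda_{\max}(\E aa^T 1_W(a))$. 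Your target statement (b) is correct and your proposed tools (VC dimension of the wedge class, chaining over selector vectors times directions) are the right ones and match the paper's Section 5, but the heuristic you give would not convince you that (b) is even true, let alone prove it.

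Two smaller remarks: your stopping time is defined by distance $\norm{e_k}_2 > \rho\norm{x}_2$ while the contraction condition is angular; this is fine since small relative distance forces small angle, but the containment should be stated. Also, ``on that event, iterated expectation yields $\E\norm{e_K}_2^2 \leq (1-c/n)^K\norm{e_0}_2^2$'' should be $\E\sqbracket{\norm{e_K}_2^2 1_{\tau=\infty}}$, after which conditioning on $\tau=\infty$ costs a factor $1/\P(\tau=\infty) \leq 2$ (this is where $\delta_1 \leq 1/2$ enters and where the $\log(2/\delta_2)$ in $K$ is absorbed).
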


Comparing this result with Theorem \ref{thm: linear convergence for linear systems}, we observe two key differences. First, there are now two sources of randomness: one is in the creation of the measurements $a_i$, and the other is in the selection of the equation at every iteration of the algorithm. The theorem gives a guarantee that holds with high probability over both sources of randomness. Theorem \ref{thm: guarantee for algorithm} also requires an initial estimate $x_0$. This is not hard to obtain. Indeed, using the \textit{truncated spectral initialization} method of \cite{Chen2015}, we may obtain such an estimate with high probability given $m \gtrsim n$. For more details, see Proposition \ref{prop: initialization guarantee}.

The proof of this theorem is more nontrivial than the Strohmer-Vershynin analysis of randomized Kaczmarz algorithm for linear systems \cite{Strohmer2009}. We break down the argument in smaller steps, each of which may be of independent interest to researchers in this field.

First, we generalize the Kaczmarz update formula \eqref{def: randomized Kaczmarz phase retrieval} and define what it means to take a randomized Kaczmarz step with respect to any probability measure on the sphere $S^{n-1}$: we choose a measurement vector at each step according to this measure. Using a simple geometric argument, we then provide a bound for the expected decrement in distance to the solution set in a single step, where the quality of the bound is given in terms of the properties of the measure we are using for the Kaczmarz update (Lemma \ref{lem: expected decrement}).

Performing the generalized Kaczmarz update with respect to the uniform measure on the sphere corresponds to running the algorithm with unlimited measurements. We utilize the symmetry of the uniform measure to compute an explicit formula for the bound on the stepwise expected decrement in distance. This decrement is geometric whenever we make the update from a point making an angle of less than $\pi/8$ with the true solution, so we obtain linear convergence conditioned on no iterates escaping from the ``basin of linear convergence''. We are able to bound the probability of this bad event using a supermartingale inequality (Theorem \ref{thm: linear convergence, unlimited measurements}).

Next, we abstract out the property of the uniform measure that allows us to obtain local linear convergence. We call this property the \emph{anti-concentration on wedges} property, calling it $\ACW$ for short. Using this convenient definition, we can easily generalize our previous proofs for the uniform measure to show that all $\ACW$ measures give rise to randomized Kaczmarz update schemes with local linear convergence (Theorem \ref{thm: linear convergence, ACW measure}).

The usual Kaczmarz update corresponds running the generalized Kaczmarz update with respect to $\mu_A := \frac{1}{m}\sum_{i=1}\delta_{a_i}$. We are able to prove that when the $a_i$'s are selected uniformly and independently from the sphere, then $\mu_A$ satisfies the $\ACW$ condition with high probability, so long as $m \gtrsim n$ (Theorem \ref{thm: finite measurement sets satisfy ACW}). The proof of this fact uses VC theory and a {\em chaining argument}, together with metric entropy estimates.

Finally, we are able to put everything together to prove a guarantee for the full algorithm in Section \ref{sec: proof of guarantee}. In that section, we also discuss the failure probabilities $\delta$, $\delta_1$ and $\delta_2$, and how they can be controlled.

\subsection{Related work}

During the preparation of this manuscript, we became aware of independent simultaneous work done by Jeong and G{\"u}nt{\"u}rk. They also studied the randomized Kaczmarz method adapted to phase retrieval, and obtained almost the same result that we did (see \cite{Jeong2017} and Theorem 1.1 therein). In order to prove their guarantee, they use a stopping time argument similar to ours, but replace the ACW condition with a stronger condition called \textit{admissibility}. They prove that measurement systems comprising vectors drawn independently and uniformly from the sphere satisfy this property with high probability, and the main tools they use in their proof are hyperplane tessellations and a net argument together with Lipschitz relaxation of indicator functions.

After submitting the first version of this manuscript, we also became aware of independent work done by Zhang, Zhou, Liang, and Chi \cite{Zhang2016}. Their work examines stochastic schemes in more generality (see Section 3 in their paper), and they claim to prove linear convergence for both the randomized Kaczmarz method as well as what they called \textit{Incremental Reshaped Wirtinger Flow}. However, they only prove that the distance to the solution decreases in expectation under a single Kaczmarz update (an analogue of our Lemma \ref{lem: expected decrement} specialized to real Gaussian measurements). As we will see in our paper, this bound cannot be naively iterated.

\subsection{Notation}

Throughout the paper, $C$ and $c$ are absolute constants that can change from line to line.

\section{Computations for a single step}

In this section, we will compute what happens in expectation for a single update step of the randomized Kaczmarz method. It will be convenient to generalize our sampling scheme slightly as follows. When we work with a fixed matrix $A$, we may view our selection of a random row $a_{r(k)}$ as drawing a random vector according to the measure $\mu_A := \frac{1}{m}\sum_{i=1}^m \delta_{a_i}$. We need not restrict ourselves to sums of Diracs. For any probability measure $\mu$ on the sphere $S^{n-1}$, we define the random map $P = P_\mu$ on vectors $z \in \R^n$ by setting
\begin{equation} \label{eq: P definition}
P z := z + \eta a,
\end{equation}
where
\begin{equation} \label{eq: sign of P}
\eta = \sign(\inprod{a,z})\abs{\inprod{a,x}} - \inprod{a,z} \quad\text{and}\quad a \sim \mu.
\end{equation}
Note that as before, $x$ is a fixed vector in $\R^n$ (think of $x$ as the actual solution of the phase retrieval problem). We call $P_\mu$ the \emph{generalized Kaczmarz projection with respect to $\mu$}. Using this update rule over independent realizations of $P$, $P_1,P_2,\ldots$, together with an initial estimate $x_0$, gives rise to a \emph{generalized randomized Kaczmarz algorithm} for finding $x$: set the $k$-th step estimate to be
\begin{equation} \label{def: generalized Kaczmarz}
x_k := P_kP_{k-1}\cdots P_1 x_0.
\end{equation} 

Fix a vector $z \in \R^n$ that is closer to $x$ than to $-x$, i.e. so that $\inprod{x,z} > 0$, and suppose that we are trying to find $x$. Examining the formula in \eqref{eq: sign of P}, we see that $P$ projects $z$ onto the right hyperplane (i.e., the one passing through $x$ instead of the one passing through $-x$) if and only if $\inprod{a,z}$ and $\inprod{a,x}$ have the same sign. In other words, this occurs if and only if the random vector $a$ does \emph{not} fall into the region of the sphere defined by
\begin{equation} \label{eq: W definition}
W_{x,z} := \braces{v \in S^{n-1} \st \sign\paren{\inprod{v,x}} \neq \sign\paren{\inprod{v,z}}}.
\end{equation}
This is the region lying between the two hemispheres with normal vectors $x$ and $z$. We call such a region a \emph{spherical wedge}, since in three dimensions it has the shape depicted in Figure \ref{fig: geometry of W}.

\begin{figure}[h]
	\includegraphics[scale=0.6]{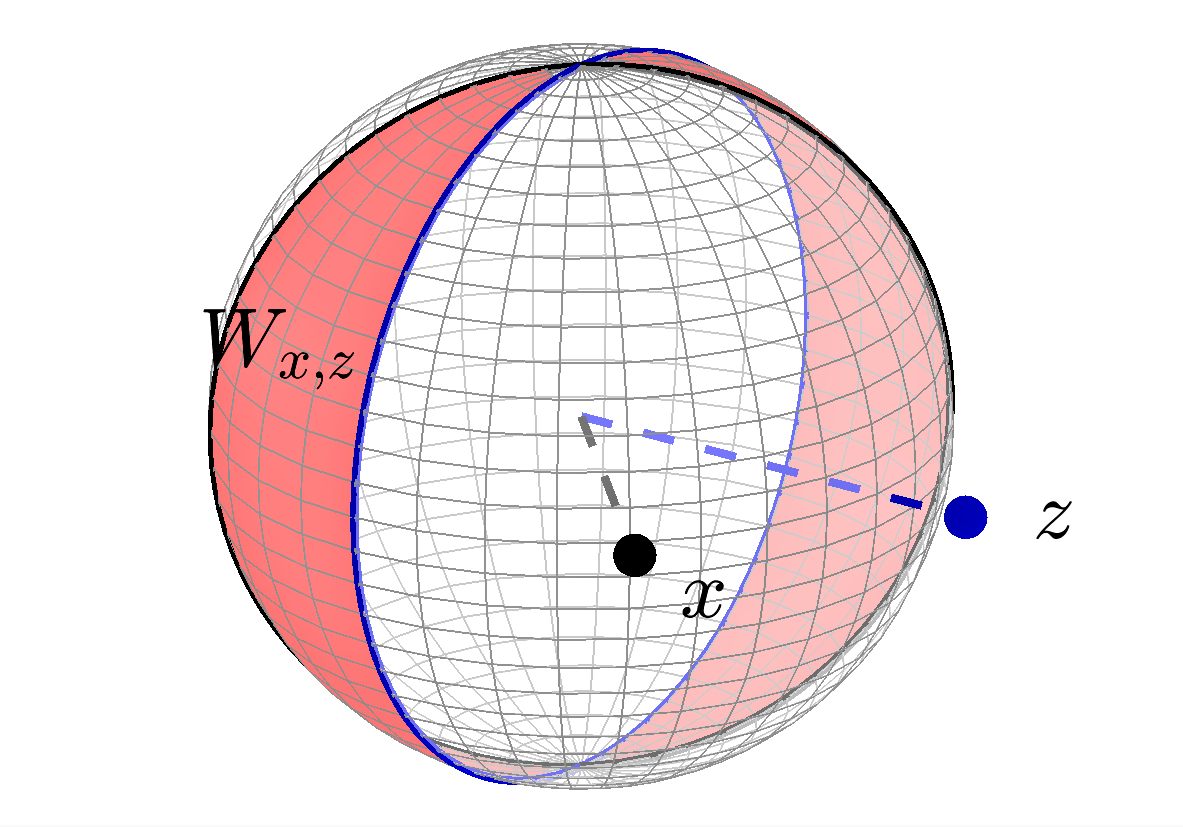}
	\centering
	\caption{Geometry of $W_{x,z}$}
	\label{fig: geometry of W}
\end{figure}

When $a \notin W_{x,z}$, we can use the Pythagorean theorem to write
\begin{equation} \label{eq: bound in A pt 1}
\norm{z-x}_2^2 = \norm{Pz-x}_2^2 + \inprod{z-x,a}^2.
\end{equation}
Rearranging gives
\begin{equation} \label{eq: bound in A}
\norm{Pz-x}_2^2 = \norm{z-x}_2^2(1 - \inprod{\tilde{z},a}^2),
\end{equation}
where $\tilde{z} = (z-x)/\norm{z-x}_2$.

In the complement of this event, we get
\begin{equation*}
Pz = z + \inprod{a,(-x)-z}a = z - \inprod{a,z-x} + \inprod{a,-2x},
\end{equation*}
and using orthogonality,
\begin{equation} \label{eq: orthogonality for P}
\norm{Pz-x}_2^2 = \norm{z-x}_2^2 - \inprod{a,z-x}^2 + \inprod{a,2x}^2.
\end{equation}

\begin{figure}
	\begin{tabular}{c c}
		\begin{minipage}{0.45\textwidth}
			\begin{center}
				
				\begin{tikzpicture}[scale=1]
				\draw[fill=black] (-1.5,0) circle[radius=2pt] node[below right] {$-x$};
				\draw[fill=black] (1.5,0) circle[radius=2pt] node[below right] {$x$};
				\draw[fill=blue] (0.8,1.5) circle[radius=2pt] node[below=2pt] {$z$};
				\draw[fill=blue] (1.9207,1.0517) circle[radius=2pt] node[below right] {$Pz$};
				
				\draw[thick]  (1.5-0.4,-1) node[below right] {$H_+$} -- (1.5+1,2.5) ;
				\draw[thick] (-1.5-0.4,-1) node[below left] {$H_-$} -- (-1.5+1,2.5);
				\draw[dashed,gray] (-0.4,-1) node[below] {$H_0$} -- (1,2.5);
				\draw[dotted] (0.8,1.5) -- (1.9207,1.0517);
				
				\draw[-]  (1.5+0.2350, 1.1260) -- (1.6607, 0.9403) -- (1.5+0.3464, 0.8660);
				
				\end{tikzpicture}
			\end{center}
		\end{minipage}
		&
		\begin{minipage}{0.45\textwidth}
			\begin{center}
				
				\begin{tikzpicture}[scale=1]
				\draw[fill=black] (-1.5,0) circle[radius=2pt] node[below right] {$-x$};
				\draw[fill=black] (1.5,0) circle[radius=2pt] node[below right] {$x$};
				\draw[fill=blue] (0.8,1.5) circle[radius=2pt] node[left] {$z$};
				\draw[fill=blue] (-0.2294, 2.1176) circle[radius=2pt] node[above left] {$Pz$};
				
				\draw[thick]  (1.5-0.6,-1) node[below right] {$H_+$} -- (1.5+1.5,2.5) ;
				\draw[thick] (-1.5-0.6,-1) node[below left] {$H_-$} -- (-1.5+1.5,2.5);
				\draw[dashed,gray] (-0.6,-1) node[below] {$H_0$} -- (1.5,2.5);
				\draw[dotted] (1.9765, 0.7941) -- (-0.2294, 2.1176);
				
				\draw[-]  (-0.3494, 1.9176) -- (-0.1779, 1.8147) -- (-0.0579, 2.0147);

				\draw[decoration={brace,mirror,raise=5pt},decorate,black]
				(1.9336, 0.8198) -- node[above right=5pt,black] {$|\inprod{a,2x}|$} (-0.1865, 2.0919);
				
				\draw[decoration={brace,raise=5pt},decorate,black]
				(1.9336, 0.8198) -- node[below left=5pt,black] {$|\inprod{a,z-x}|$} (0.8429, 1.4743);

				\end{tikzpicture}
			\end{center}
		\end{minipage}
	\end{tabular}
	\caption{Orientation of $x$, $z$, and $Pz$ when $a \in W_{x,z}$ and when $a \notin W_{x,z}$. $H_+$ and $H_-$ denote respectively the hyperplanes defined by the equations $\inprod{y,a} = b$ and $\inprod{y,a} = -b$. $H_0$ denotes the hyperplane defined by the equation $\inprod{y,a} = 0$. The left diagram demonstrates the situation when $a \in W_{x,z}$, thereby justifying \eqref{eq: bound in A pt 1}. The right diagram demonstrates the situation when $a \notin W_{x,z}$, thereby justifying \eqref{eq: bound in A^c}. }
	\label{fig: projection}
\end{figure}
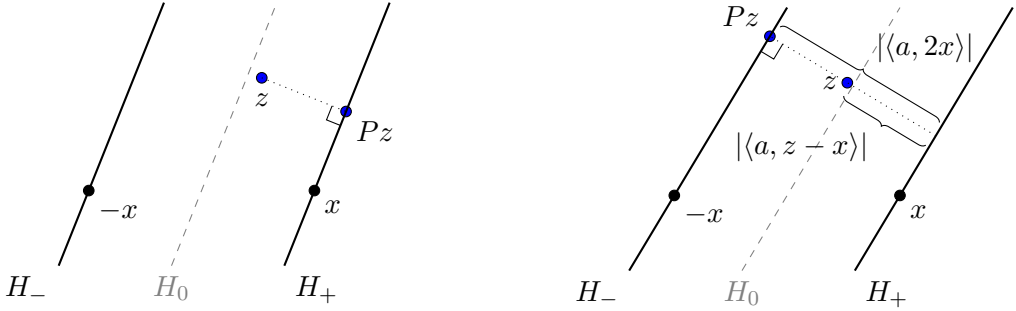

Since $z$ gets projected to the hyperplane containing $-x$, it may move further away from $x$. However, we can bound how far away it can move. Because $\inprod{a,x}$ has the opposite sign as $\inprod{a,z}$, we have
\begin{equation*}
\abs{\inprod{a,z+x}} < \abs{\inprod{a,z-x}},
\end{equation*}
and so
\begin{equation*}
\abs{\inprod{a,2x}} = \abs{\inprod{a,(z-x) - (z + x)}} < 2 \abs{\inprod{a,z-x}}.
\end{equation*}
Substituting this into \eqref{eq: orthogonality for P}, we get the bound
\begin{equation} \label{eq: bound in A^c}
\norm{Pz-x}_2^2 \leq \norm{z-x}_2^2 + 3\inprod{a,z-x}^2 = \norm{z-x}_2^2(1 + 3\inprod{\tilde{z},a}^2),
\end{equation}
where $\tilde{z}$ is as before.

We can combine \eqref{eq: bound in A} and \eqref{eq: bound in A^c} into a single inequality by writing
\begin{align*}
\norm{Pz-x}_2^2 & \leq \norm{z-x}_2^2(1 - \inprod{\tilde{z},a}^2)1_{W_{x,z}^c}(a) +  \norm{z-x}_2^2(1 + 3\inprod{\tilde{z},a}^2)1_{W_{x,z}}(a) \\
& = \norm{z-x}_2^2(1 - (1-4\cdot 1_{W_{x,z}}(a))\inprod{\tilde{z},a}^2) \\
& = \norm{z-x}_2^2(1 - \inprod{\tilde{z},(1-4\cdot 1_{W_{x,z}}(a))aa^T \tilde{z}}).
\end{align*}
Taking expectations, we can remove the role that $\tilde{z}$ plays by bounding this as follows.
\begin{align*}
\E\sqbracket{\norm{z-x}_2^2(1 - \inprod{\tilde{z},(1-4\cdot 1_{W_{x,z}}(a))aa^T \tilde{z}})} & = \norm{z-x}_2^2(1 - \inprod{\tilde{z},\E\sqbracket{(1-4\cdot 1_{W_{x,z}}(a))aa^T} \tilde{z}}) \\
& \leq \norm{z-x}_2^2\sqbracket*{1 -\lambda_{\min}(\E aa^T-4\E aa^T1_{W_{x,z}}(a))}.
\end{align*}

We may thus summarize what we have obtained in the following lemma.

\begin{lemma}[Expected decrement] \label{lem: expected decrement}
	Fix vectors $x,z \in \R^n$, a probability measure $\mu$ on $S^{n-1}$, and let $P = P_\mu$, $W_{x,z}$ be defined as in \eqref{eq: P definition} and \eqref{eq: W definition} respectively. Then
	\begin{equation*}
	\E\norm{Pz-x}_2^2 \leq \sqbracket*{1 -\lambda_{\min}(\E aa^T-4\E aa^T1_{W_{x,z}}(a))}\norm{z-x}_2^2.
	\end{equation*}
\end{lemma}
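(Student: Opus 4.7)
The plan is to split on the event $\{a \in W_{x,z}\}$, establish a pointwise bound for $\norm{Pz-x}_2^2$ in each case, combine the two into a single expression via indicator functions, and then take expectations.

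First I would treat the ``good'' case $a \notin W_{x,z}$: here the signs of $\inprod{a,x}$ and $\inprod{a,z}$ agree, so by \eqref{eq: sign of P} the update $Pz$ is the orthogonal projection of $z$ onto the correct hyperplane $\{y : \inprod{y,a} = \inprod{x,a}\}$, which contains $x$. The Pythagorean theorem immediately gives $\norm{Pz-x}_2^2 = \norm{z-x}_2^2 - \inprod{a,z-x}^2$. In the ``bad'' case $a \in W_{x,z}$, the update instead lands on the hyperplane through $-x$; writing $Pz - x$ explicitly and using orthogonality yields $\norm{Pz-x}_2^2 = \norm{z-x}_2^2 - \inprod{a,z-x}^2 + \inprod{a,2x}^2$. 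The sign discrepancy defining $W_{x,z}$ forces $\abs{\inprod{a,z+x}} < \abs{\inprod{a,z-x}}$ and hence $\abs{\inprod{a,2x}} < 2\abs{\inprod{a,z-x}}$, so the loss is controlled: $\norm{Pz-x}_2^2 \leq \norm{z-x}_2^2 + 3\inprod{a,z-x}^2$.

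Next I would unify the two bounds into
\begin{equation*}
\norm{Pz-x}_2^2 \leq \norm{z-x}_2^2\paren*{1 - (1 - 4\cdot 1_{W_{x,z}}(a))\inprod{\tilde z,a}^2},
\end{equation*}
where $\tilde z := (z-x)/\norm{z-x}_2$. Rewriting $\inprod{\tilde z,a}^2 = \tilde z^T aa^T \tilde z$ and taking expectations over $a \sim \mu$ lets me pull $\tilde z$ outside the integral, reducing the right-hand side to $\norm{z-x}_2^2\paren*{1 - \tilde z^T M \tilde z}$ with $M := \E aa^T - 4\,\E[aa^T 1_{W_{x,z}}(a)]$. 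Since $\norm{\tilde z}_2 = 1$, the Rayleigh quotient bound $\tilde z^T M \tilde z \geq \lambda_{\min}(M)$ closes the argument.

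The only step that requires any ingenuity is the bad-case estimate: one must extract the quantitative inequality $\abs{\inprod{a,2x}} < 2\abs{\inprod{a,z-x}}$ from the sign asymmetry built into $W_{x,z}$, which is precisely what prevents the expected loss from blowing up relative to $\inprod{a,z-x}^2$. Everything else is Pythagoras, linearity of expectation, and a standard spectral inequality; in particular no probabilistic machinery beyond linearity is needed, since the claim concerns a single step.
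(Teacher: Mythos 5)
Your proposal is correct and follows essentially the same route as the paper: the same case split on $a \in W_{x,z}$, the same Pythagorean identity and the same bound $\abs{\inprod{a,2x}} < 2\abs{\inprod{a,z-x}}$ in the bad case, the same unified indicator expression, and the same Rayleigh-quotient step after taking expectations. No gaps.
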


Let us next compute what happens for $\mu = \sigma$, the uniform measure on the sphere. It is easy to see that $\E aa^T = \frac{1}{n}I_n$, so it remains to compute $\E aa^T1_{W_{x,z}}(a)$. To do this, we make a convenient choice of coordinates: Let $\theta$ be the angle between $z$ and $x$. We assume that both points lie in the plane spanned by $e_1$ and $e_2$, the first two basis vectors, and that the angle between $z$ and $x$ is bisected by $e_1$, as illustrated in Figure \ref{fig: choice of coordinates}.

\begin{figure}[h]
	\includegraphics[scale=0.5]{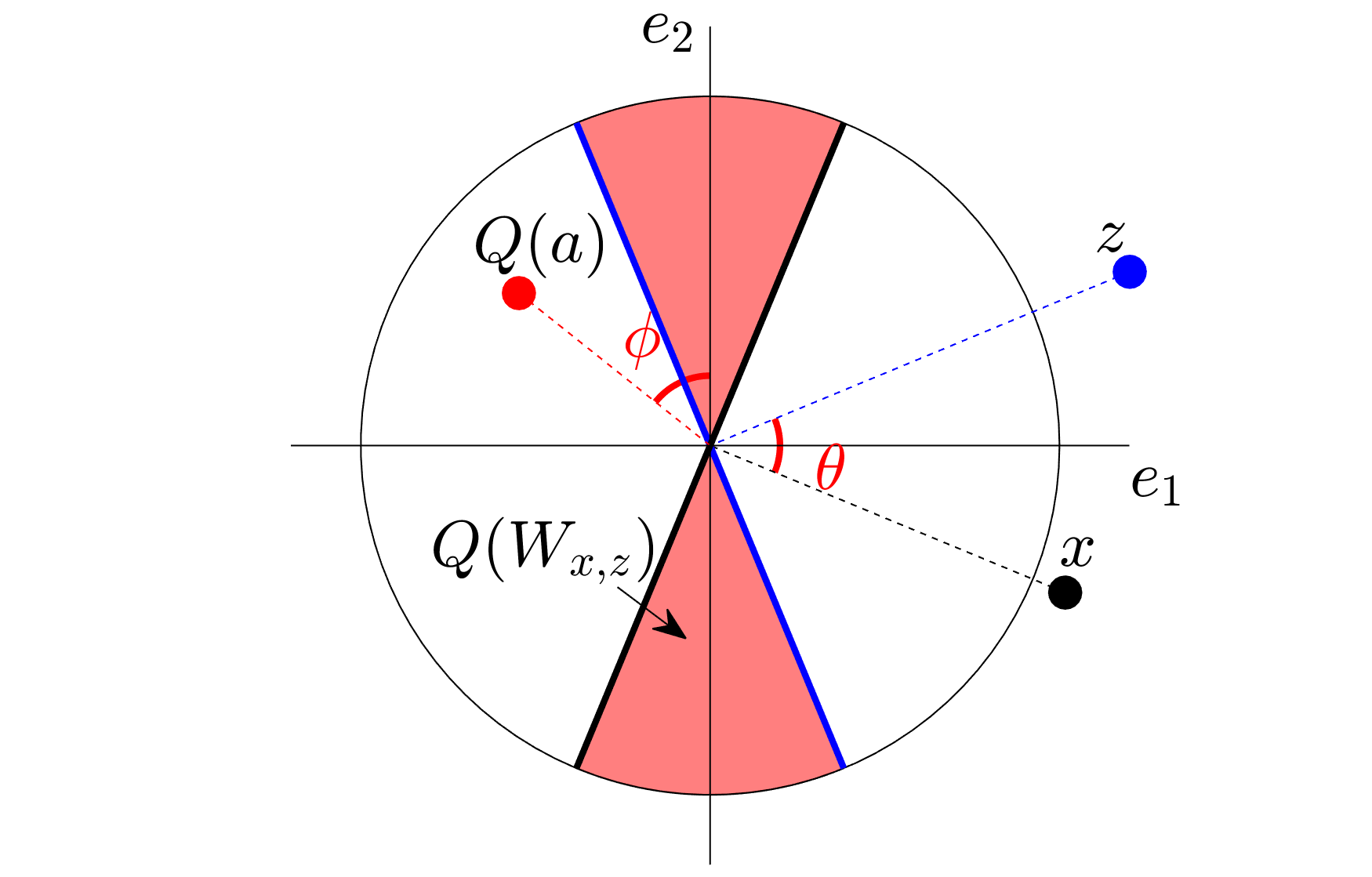}
	\centering
	\caption{Choice of coordinates}
	\label{fig: choice of coordinates}
\end{figure}

For convenience, denote $M := \E aa^T 1_{W_{x,z}}(a)$. Let $Q$ denote the orthogonal projection operator onto the span of $e_1$ and $e_2$. Then $Q(W_{x,z})$ is the union of two sectors of angle $\theta$, which are respectively bisected by $e_2$ and $-e_2$. Recall that all coordinate projections of the uniform random vector $a$ are uncorrelated. It is clear that from the symmetry in Figure \ref{fig: choice of coordinates} that they remain uncorrelated even when conditioning on the event that $a \in W_{x,z}$. As such, $M$ is a diagonal matrix.

Let $\phi$ denote the anti-clockwise angle of $Qa$ from $e_2$ (see Figure \ref{fig: choice of coordinates}). We may write
\begin{equation*}
\inprod{a,e_1}^2 = \norm{Qa}_2^2 \inprod{Qa/\norm{Qa}_2, e_1}^2 = \norm{Qa}_2^2 \sin^2\phi.
\end{equation*}

Note that the magnitude and direction of $Qa$ are independent, and $a \in W_{x,z}$ if either $\phi$ or $\phi - \pi$ lies between $-\theta/2$ and $\theta/2$. We therefore have
\begin{equation*}
M_{11} = \E \sqbracket{\inprod{a,e_1}^21_{W_{x,z}}(a)} = \E\sqbracket{\norm{Qa}_2^2\E\sin^2\phi 1_{(-\theta/2,\theta/2)}(\phi \text{ or }\phi-\pi)}.
\end{equation*}
By a standard calculation using symmetry, we have $\E\norm{Qa}_2^2 = 2/n$. Since $\phi$ is distributed uniformly on the circle, we can compute
\begin{equation*}
\E\sin^2\phi 1_{(-\theta/2,\theta/2)}(\phi \text{ or }\phi-\pi) = \frac{1}{\pi}\int_{-\theta/2}^{\theta/2} \sin^2 t dt = \frac{1}{\pi} \int_{-\theta/2}^{\theta/2} \frac{1-\cos(2t)}{2} dt = \frac{\theta-\sin\theta}{2\pi}.
\end{equation*}

As such, we have $M_{11} = (\theta - \sin\theta)/n\pi$, and by a similar calculation, $M_{22} = (\theta + \sin\theta)/n\pi$. Meanwhile, for $i \geq 3$ we have
\begin{align*}
M_{ii} & = \frac{\Tr\paren{M} - M_{11} - M_{22}}{n-2} \\
& =  \frac{\E\sqbracket{\norm{(I-Q)a}_2^21_{W_{x,z}}(a)}}{n-2} \\
& = \frac{\E\norm{(I-Q)a}_2^2\E 1_{(-\theta/2,\theta/2)}(\phi \text{ or }\phi-\pi)}{n-2} \\
& = \frac{(n-2)/n \cdot \theta/\pi }{n} = \frac{\theta}{n\pi}.
\end{align*}
This implies that
\begin{equation} \label{eq: max eigenvalue of covariance conditioned on wedge}
\lambda_{\max}(M_\theta) = \frac{\theta + \sin\theta}{n\pi}.
\end{equation}
We have now completed proving the following lemma.

\begin{lemma}[Expected decrement for uniform measure] \label{lem: expected decrement, uniform measure}
	Fix vectors $x, z \in \R^n$ such that $\inprod{z,x} > 0$, and let $P = P_\sigma$ denote the generalized Kaczmarz projection with respect to $\sigma$, the uniform measure on the sphere. Let $\theta$ be the angle between $z$ and $x$. Then
	\begin{equation*}
	\E\norm{Pz-x}_2^2 \leq \Big[ 1 - \frac{1-4(\theta+\sin\theta)/\pi}{n}   \Big] \norm{z-x}_2^2.
	\end{equation*}
\end{lemma}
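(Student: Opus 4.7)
The plan is to apply the previous Lemma (Expected decrement) to the uniform measure $\sigma$, which reduces the problem to bounding $\lambda_{\min}(\E aa^T - 4\E aa^T 1_{W_{x,z}}(a))$ from below. Since $\sigma$ is rotationally invariant we immediately have $\E aa^T = \frac{1}{n}I_n$, so the remaining work is to identify the eigenstructure of $M := \E aa^T 1_{W_{x,z}}(a)$.

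Next I would set up convenient coordinates: rotate so that both $x$ and $z$ lie in the $e_1 e_2$-plane and the bisector of the angle $\theta$ between them is along $e_1$. In these coordinates, $W_{x,z}$ is a ``double wedge'' that is symmetric under reflection across both the $e_1$-axis and the $e_2$-axis within the $e_1e_2$-plane, and fully rotationally symmetric in the orthogonal complement $\mathrm{span}(e_3,\dots,e_n)$. This symmetry, combined with the fact that the coordinates of a uniform vector on $S^{n-1}$ are uncorrelated, forces $M$ to be diagonal and its last $n-2$ diagonal entries to be equal.

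I would then compute the three distinct diagonal entries explicitly. Writing $a = Qa + (I-Q)a$ with $Q$ the projection onto $\mathrm{span}(e_1,e_2)$, and parametrising $Qa/\|Qa\|_2$ by its angle $\phi$ with $e_2$, the independence of magnitude and direction for the uniform distribution reduces each entry to the product of $\E \|Qa\|_2^2 = 2/n$ (respectively $\E\|(I-Q)a\|_2^2 = (n-2)/n$) with a one-dimensional integral over $\phi \in (-\theta/2,\theta/2) \cup (\pi-\theta/2,\pi+\theta/2)$. Using $\sin^2 t = (1-\cos 2t)/2$ and $\cos^2 t = (1+\cos 2t)/2$ yields $M_{11} = (\theta - \sin\theta)/(n\pi)$, $M_{22} = (\theta + \sin\theta)/(n\pi)$, and $M_{ii} = \theta/(n\pi)$ for $i\geq 3$.

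Finally, $\E aa^T - 4M$ is diagonal with entries $\frac{1}{n} - 4M_{ii}$, so its smallest eigenvalue is $\frac{1}{n} - 4 \max_i M_{ii} = \frac{1}{n} - \frac{4(\theta + \sin\theta)}{n\pi}$, achieved at $i=2$. Plugging this into Lemma \ref{lem: expected decrement} gives exactly the stated bound. The only mildly subtle step is justifying the diagonality of $M$ purely from symmetry rather than computing off-diagonal integrals, but the reflection symmetry across the $e_1$- and $e_2$-axes makes this essentially automatic; the rest is the bookkeeping indicated above.
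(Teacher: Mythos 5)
Your proposal is correct and follows essentially the same route as the paper: reduce to $\lambda_{\min}\paren{\frac{1}{n}I_n - 4M}$ via Lemma \ref{lem: expected decrement}, choose coordinates so the bisector of $x$ and $z$ is $e_1$, use symmetry to diagonalize $M = \E aa^T 1_{W_{x,z}}(a)$, and compute $M_{11} = (\theta-\sin\theta)/(n\pi)$, $M_{22} = (\theta+\sin\theta)/(n\pi)$, $M_{ii} = \theta/(n\pi)$ for $i \geq 3$ by splitting $a$ into $Qa$ and $(I-Q)a$ and integrating over the angle $\phi$. The only cosmetic difference is that the paper obtains the $i\geq 3$ entries by a trace argument rather than a direct integral, which is equivalent to what you describe.
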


\begin{remark}
	By being more careful, one may compute an \emph{exact} formula for the expected decrement rather than a bound as is the case in previous lemma. This is not necessary for our purposes and does not give better guarantees in our analysis, so the computation is omitted.
\end{remark}

\section{Local linear convergence using unlimited uniform measurements}

In this section, we will show that if we start with an initial estimate that is close enough to the ground truth $x$, then repeatedly applying generalized Kaczmarz projections with respect to the uniform measure $\sigma$ gives linear convergence in expectation. This is exactly the situation we would be in if we were to run randomized Kaczmarz given an unlimited supply of independent sampling vector $a_1,a_2,\ldots$ drawn uniformly from the sphere.

We would like to imitate the proof for linear convergence of randomized Kaczmarz for linear systems (Theorem \ref{thm: linear convergence for linear systems}) given in \cite{Strohmer2009}. We denote by $X_k$ the estimate after $k$ steps, using capital letters to emphasize the fact that it is a random variable. If we know that $X_k$ takes the value $x_k \in \R^n$, and the angle $\theta_k$ that $z$ makes with $x_k$ is smaller than $\pi/8$, then, Lemma \ref{lem: expected decrement, uniform measure} tells us

\begin{equation} \label{eq: decrement conditioned on X_k}
\E\sqbracket{\norm{X_{k+1}-x}_2^2 \st X_k = x_k} \leq (1-\alpha_\sigma/n)\norm{x_k-x}_2^2,
\end{equation}
where $\alpha_\sigma := 1/2 - 4\sin(\pi/8)/\pi > 0$.

The proof for Theorem \ref{thm: linear convergence for linear systems} proceeds by unconditioning and iterating a bound similar to \eqref{eq: decrement conditioned on X_k}. Unfortunately, our bound depends on $x_k$ being in a specific region in $\R^n$ and does not hold arbitrarily. Nonetheless, by using some basic concepts from stochastic process theory, we may derive a \emph{conditional} linear convergence estimate. The details are as follows.

For each $k$, let $\mathcal{F}_k$ denote the $\sigma$-algebra generated by $a_1,a_2,\ldots,a_k$, where $a_k$ is the sampling vector used in step $k$. Let $B \subset \R^n$ be the region comprising all points making an angle less than or equal to $\pi/8$ with $x$. This is our \emph{basin of linear convergence}. Let us assume a fixed initial estimate $x_0 \in B$. Now define a stopping time $\tau$ via

\begin{equation} \label{eq: definition of tau}
\tau := \min \braces{k ~\colon ~ X_k \notin B}.
\end{equation}

For each $k$, and $x_k \in B$, we have

\begin{align*}
\E\sqbracket{\norm{X_{k+1}-x}_2^21_{\tau > {k+1}} \st X_k = x_k} & \leq  \E\sqbracket{\norm{X_{k+1}-x}_2^2 1_{\tau > k} \st X_k = x_k}  \\
& = \E\sqbracket{\norm{X_{k+1}-x}_2^21_{\tau > k} \st X_k = x_k, \mathcal{F}_k} \\
& = \E\sqbracket{\norm{X_{k+1}-x}_2^2 \st X_k = x_k, \mathcal{F}_k}1_{\tau > k} \\
& \leq (1-\alpha_\sigma/n)\norm{x_k-x}_2^2 1_{\tau > k}.
\end{align*}

Here, the first inequality follows from the inclusion $\braces{\tau > {k+1}} \subset \braces{\tau > k}$, the first equality statement from the Markov nature of the process $(X_k)$, the second equality statement from the fact that $\tau$ is a stopping time, while the second inequality is simply \eqref{eq: decrement conditioned on X_k}. Taking expectations with respect to $X_k$ then gives

\begin{align*}
\E\sqbracket{\norm{X_{k+1}-x}_2^2 1_{\tau > {k+1}}} & = \E\sqbracket{\E\sqbracket{\norm{X_{k+1}-x}_2^21_{\tau > {k+1}} \st X_k} } \\
& \leq (1-\alpha_\sigma/n)\E\sqbracket{\norm{X_k-x}_2^2 1_{\tau > k}}.
\end{align*}

By induction, we therefore obtain
\begin{equation*}
\E\sqbracket{\norm{X_k-x}_2^2 1_{\tau > k}} \leq (1-\alpha_\sigma/n)^k\norm{x_0-x}_2^2.
\end{equation*}

We have thus proven the first part of the following convergence theorem.

\begin{theorem}[Linear convergence from unlimited measurements] \label{thm: linear convergence, unlimited measurements}
	Let $x$ be a vector in $\R^n$, let $\delta > 0$, and let $x_0$ be an initial estimate to $x$ such that $\norm{x_0 - x}_2 \leq \delta\norm{x}_2$. Suppose that our measurements $a_1,a_2,\ldots$ are fully independent random vectors distributed uniformly on the sphere $S^{n-1}$. Let $X_k$ be the estimate given by the randomized Kaczmarz update formula \eqref{def: generalized Kaczmarz} at step $k$, and let $\tau$ be the stopping time defined via \eqref{eq: definition of tau}. Then for every $k \in \Z_+$,
	\begin{equation}
	\E\sqbracket{\norm{X_k-x}_2^2 1_{\tau = \infty}} \leq (1-\alpha_\sigma/n)^k\norm{x_0-x}_2^2,
	\end{equation}
	where $\alpha_\sigma = 1/2 - 4\sin(\pi/8)/\pi > 0$. Furthermore, $\P(\tau < \infty) \leq (\delta/\sin(\pi/8))^2$.
\end{theorem}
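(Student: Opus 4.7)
The theorem contains two assertions. The first is essentially established by the chain of inequalities immediately preceding the statement, so I would simply record that calculation cleanly; my main attention would be on the probability bound for $\{\tau < \infty\}$.

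For the first bound, I iterate the one-step inequality already derived. Taking expectations gives $\E\sqbracket{\norm{X_{k+1}-x}_2^2\, 1_{\tau > k+1}} \leq (1-\alpha_\sigma/n)\,\E\sqbracket{\norm{X_k-x}_2^2\, 1_{\tau > k}}$, and unrolling the recursion yields $\E\sqbracket{\norm{X_k-x}_2^2\, 1_{\tau > k}} \leq (1-\alpha_\sigma/n)^k \norm{x_0-x}_2^2$. Since $\{\tau = \infty\} \subseteq \{\tau > k\}$, I may replace $1_{\tau > k}$ with the pointwise smaller $1_{\tau = \infty}$ to conclude the first display.

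For the probability bound, I would pass to the stopped process $Y_k := \norm{X_{k\wedge\tau}-x}_2^2$ and show it is a nonnegative supermartingale with respect to $(\mathcal{F}_k)$. On $\{\tau > k\}$, the iterate $X_k$ lies in $B$, so Lemma~\ref{lem: expected decrement, uniform measure} together with the definition of $\alpha_\sigma$ gives $\E[Y_{k+1}\mid \mathcal{F}_k] \leq (1-\alpha_\sigma/n) Y_k \leq Y_k$; on $\{\tau \leq k\}$ the process is frozen, so $Y_{k+1} = Y_k$ identically. Doob's maximal inequality for nonnegative supermartingales then delivers $\P(\sup_{k \geq 0} Y_k \geq \lambda) \leq \norm{x_0-x}_2^2 / \lambda$ for every $\lambda > 0$.

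The punchline is a simple geometric observation. On $\{\tau < \infty\}$, the angle between $X_\tau$ and $x$ exceeds $\pi/8$ by definition of $\tau$, so $\norm{X_\tau - x}_2$ is bounded below by the distance from $x$ to any ray through the origin making angle $\pi/8$ with $x$, which equals $\norm{x}_2 \sin(\pi/8)$. Hence $\{\tau < \infty\} \subseteq \{\sup_k Y_k \geq (\norm{x}_2 \sin(\pi/8))^2\}$, and substituting this $\lambda$ into the maximal inequality together with $\norm{x_0 - x}_2 \leq \delta \norm{x}_2$ yields the required bound $(\delta/\sin(\pi/8))^2$. The only subtlety is that Lemma~\ref{lem: expected decrement, uniform measure} delivers a one-step contraction only inside the basin $B$; stopping at $\tau$ is precisely the device that extends this local contraction into a global supermartingale statement over all of $\Z_+$.
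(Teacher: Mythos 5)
Your proposal is correct and follows essentially the same route as the paper: iterating the conditional one-step contraction against the indicator $1_{\tau>k}$ for the first claim, and for the second claim showing the stopped process $Y_k=\norm{X_{\tau\wedge k}-x}_2^2$ is a nonnegative supermartingale and applying the maximal inequality at level $\sin^2(\pi/8)\norm{x}_2^2$. The only cosmetic difference is that the paper routes the geometric observation through an auxiliary stopping time $T\le\tau$, whereas you state the event inclusion $\{\tau<\infty\}\subseteq\{\sup_k Y_k\ge \sin^2(\pi/8)\norm{x}_2^2\}$ directly; these are the same argument.
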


\begin{proof}
	In order to prove the second statement, we combine a stopping time argument with a supermartingale maximal inequality.	Set $Y_k := \norm{X_{\tau \wedge k}-x}_2^2$. We claim that $Y_k$ is a supermartingale. To see this, we break up its conditional expectation as follows: 
	\begin{align*}
	\E\sqbracket{Y_{k+1} \st \mathcal{F}_k} & = \E\sqbracket{\norm{X_{\tau \wedge (k+1)}-x}_2^21_{\tau \leq k} \st \mathcal{F}_k} + \E\sqbracket{\norm{X_{\tau \wedge (k+1)}-x}_2^21_{\tau > k} \st \mathcal{F}_k} \\
	& = \E\sqbracket{\norm{X_{\tau \wedge k}-x}_2^21_{\tau \leq k} \st \mathcal{F}_k} + \E\sqbracket{\norm{X_{k+1}-x}_2^21_{\tau > k} \st \mathcal{F}_k}.
	\end{align*}
	
	Since $\norm{X_{\tau \wedge k}-x}_2^2$ is measurable with respect to $\mathcal{F}_k$, we get
	\begin{equation*}
	\E\sqbracket{\norm{X_{\tau \wedge k}-x}_2^21_{\tau \leq k} \st \mathcal{F}_k} = \norm{X_{\tau \wedge k}-x}_2^21_{\tau \leq k} = Y_k 1_{\tau \leq k}.
	\end{equation*}
	Meanwhile, on the event $\tau > k$, we have $X_k \in B$, so we may use \eqref{eq: decrement conditioned on X_k} to obtain
	\begin{equation*}
	\E\sqbracket{\norm{X_{k+1}-x}_2^21_{\tau > k} \st \mathcal{F}_k} = \E\sqbracket{\norm{X_{k+1}-x}_2^2 \st \mathcal{F}_k}1_{\tau > k} \leq (1-\alpha_\sigma/n)\norm{X_k-x}_2^21_{\tau > k}.
	\end{equation*}
	Next, notice that
	\begin{equation*}
	\norm{X_k-x}_2^21_{\tau > k} = \norm{X_{\tau \wedge k}-x}_2^21_{\tau > k} = Y_k 1_{\tau > k}.
	\end{equation*}
	Combining these calculations gives
	\begin{equation*}
	\E\sqbracket{Y_{k+1} \st \mathcal{F}_k} \leq Y_k 1_{\tau \leq k} + (1-\alpha_\sigma/n)Y_k 1_{\tau > k} \leq Y_k.
	\end{equation*}
	
	Now define a second stopping time $T$ to be the earliest time $k$ such that $\norm{X_k-x}_2 \geq \sin(\pi/8) \cdot \norm{x}_2$. A simple geometric argument tells us that $T \leq \tau$, and that $T$ also satisfies
	\begin{equation*}
	T = \inf\braces{k \st Y_k \geq \sin^2(\pi/8)\norm{x}_2^2}.
	\end{equation*}
	As such, we have
	\begin{equation*}
	\P(\tau < \infty ) \leq \P(T < \infty) = \P\Big(\sup_{1 \leq k < \infty} Y_k \geq \sin^2(\pi/8) \norm{x}_2^2\Big).
	\end{equation*}
	Since $(Y_k)$ is a non-negative supermartingale, we may apply the supermartingale maximal inequality to obtain a bound on the right hand side:
	\begin{equation*}
	\P\Big(\sup_{1 \leq k < \infty} Y_k \geq \sin^2(\pi/8) \norm{x}_2^2\Big) \leq \frac{\E Y_0}{\sin^2(\pi/8)\norm{x}_2^2} \leq (\delta/\sin(\pi/8))^2.
	\end{equation*}
	This completes the proof of the theorem.
\end{proof}

\begin{corollary}
	Fix $\epsilon > 0$, $0 < \delta_1 \leq 1/2$, and $0 < \delta_2 \leq 1$. In the setting of Theorem \ref{thm: linear convergence, unlimited measurements}, suppose that $\norm{x_0-x}_2 \leq \sqrt{\delta_1}\sin(\pi/8)\norm{x}_2$. Then with probability at least $1-\delta_1-\delta_2$, if $k \geq (\log(2/\epsilon)+\log(1/\delta_2))n/\alpha_\sigma$ then $\norm{X_k-x}_2^2 \leq \epsilon\norm{x_0-x}_2^2$.
\end{corollary}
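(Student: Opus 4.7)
The plan is to combine the two conclusions of Theorem \ref{thm: linear convergence, unlimited measurements} via a union bound, using Markov's inequality to turn the expected decrement into a high-probability statement. The hypothesis $\norm{x_0-x}_2 \leq \sqrt{\delta_1}\sin(\pi/8)\norm{x}_2$ is exactly the hypothesis of Theorem \ref{thm: linear convergence, unlimited measurements} with $\delta = \sqrt{\delta_1}\sin(\pi/8)$, so the escape-from-basin probability is bounded by $(\delta/\sin(\pi/8))^2 = \delta_1$. This disposes of the first source of failure.

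Next, I would use the first conclusion of the theorem together with the standard estimate $(1-\alpha_\sigma/n)^k \leq e^{-k\alpha_\sigma/n}$. The assumption $k \geq (\log(2/\epsilon)+\log(1/\delta_2))n/\alpha_\sigma$ gives $e^{-k\alpha_\sigma/n} \leq \epsilon\delta_2/2$, hence
\[
\E\sqbracket*{\norm{X_k-x}_2^2 \, 1_{\tau=\infty}} \leq \frac{\epsilon\delta_2}{2}\norm{x_0-x}_2^2.
\]
Applying Markov's inequality to the nonnegative random variable $\norm{X_k-x}_2^2\, 1_{\tau=\infty}$ with threshold $\epsilon\norm{x_0-x}_2^2$ yields
\[
\P\paren*{\norm{X_k-x}_2^2\, 1_{\tau=\infty} \geq \epsilon\norm{x_0-x}_2^2} \leq \frac{\delta_2}{2} \leq \delta_2.
\]

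Finally, I would combine the two bounds via a union bound. On the intersection of $\{\tau = \infty\}$ and $\{\norm{X_k-x}_2^2\, 1_{\tau=\infty} < \epsilon\norm{x_0-x}_2^2\}$, the indicator $1_{\tau=\infty}$ equals $1$, so $\norm{X_k-x}_2^2 < \epsilon\norm{x_0-x}_2^2$ holds. The union bound then shows this event has probability at least $1-\delta_1-\delta_2$, completing the proof. There is no real obstacle here; the corollary is a routine consequence of the theorem, and the only thing to keep an eye on is that the two error budgets $\delta_1$ (escape probability) and $\delta_2/2$ (Markov slack) are kept separate so that the lower bound on $k$ cleanly absorbs the factor of $2$.
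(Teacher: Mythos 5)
Your proof is correct and follows essentially the same route as the paper: bound $\P(\tau<\infty)\le\delta_1$ from the second conclusion of Theorem \ref{thm: linear convergence, unlimited measurements}, then convert the expected decrement into a probability bound via Markov's inequality and combine. The only (harmless) difference is that you apply Markov directly to $\norm{X_k-x}_2^2\,1_{\tau=\infty}$ and finish with a union bound, whereas the paper conditions on $\{\tau=\infty\}$ and uses $\P(\tau=\infty)\ge 1/2$ to pass between the truncated and conditional expectations; your variant is marginally cleaner since it never needs the hypothesis $\delta_1\le 1/2$.
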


\begin{proof}
	First observe that
	\begin{equation*}
	\P(\tau < \infty) \leq \paren*{\frac{\sqrt{\delta_1}\sin(\pi/8)}{\sin(\pi/8)}}^2 = \delta_1 \leq 1/2. 
	\end{equation*}
	
	Next, since
	\begin{align*}
	\E\sqbracket{\norm{X_k-x}_2^2 1_{\tau = \infty}} & = \E\sqbracket{\norm{X_k-x}_2^2 \st \tau = \infty}\P(\tau = \infty) + 0 \cdot \P(\tau < \infty) \\
	& \geq \frac{1}{2}\E\sqbracket{\norm{X_k-x}_2^2 \st \tau = \infty},
	\end{align*}
	applying Theorem \ref{thm: linear convergence, unlimited measurements} gives
	\begin{align*}
	\E\sqbracket{\norm{X_k-x}_2^2 \st \tau = \infty} \leq 2(1-\alpha_\sigma/n)^k\norm{x_0-x}_2^2.
	\end{align*}
	Applying Markov's inequality then gives
	\begin{align*}
	\P\paren{\norm{X_k-x}_2^2 > \epsilon \norm{x_0-x}_2^2 \st \tau = \infty} & \leq \frac{\E\sqbracket{\norm{X_k-x}_2^2 \st \tau = \infty}}{\epsilon \norm{x_0-x}_2^2} \\
	& \leq \frac{2(1-\alpha_\sigma/n)^k}{\epsilon}.
	\end{align*}
	
	Plugging our choice of $k$ into this last bound shows that it is in turn bounded by $\delta_2$. We therefore have
	\begin{align*}
	\P\paren{\norm{X_k-x}_2^2 \leq \epsilon \norm{x_0-x}_2^2 } & = \P\paren{\norm{X_k-x}_2^2 \leq \epsilon \norm{x_0-x}_2^2 \st \tau = \infty} \P\paren{\tau = \infty} \\
	& \geq (1-\delta_2)(1-\delta_1) \\
	& \geq 1 - \delta_1 - \delta_2
	\end{align*}
	as we wanted.
\end{proof}

\section{Local linear convergence for $\ACW(\theta,\alpha)$ measures}

We would like to extend the analysis in the previous section to the setting where we only have access to finitely many uniform measurements, i.e. when we are back in the situation of \eqref{eq: phase retrieval}. When we sample uniformly from the rows of $A$, this can be seen as running the generalized randomized Kaczmarz algorithm using the measure $\mu_A = \frac{1}{m}\sum_{i=1}^m \delta_{a_i}$ as opposed to $\mu = \sigma$.

If we retrace our steps, we will see that the key property of the uniform measure $\sigma$ that we used was that if $W \subset S^{n-1}$ is a wedge\footnote{Recall that a wedge of angle $\theta$ is the region of the sphere between two hemispheres with normal vectors making an angle of $\theta$.} of angle $\theta$, then we could make $\lambda_{\max}(\E_\sigma aa^T 1_{W}(a))$ arbitrarily small by taking $\theta$ small enough (see equation \eqref{eq: max eigenvalue of covariance conditioned on wedge}). We do not actually need such a strong statement. It suffices for there to be an absolute constant $\alpha$ such that
\begin{equation} \label{eq: max eigenvalue goal}
\lambda_{\min}(\E aa^T-4\E aa^T1_{W}(a)) \geq \frac{\alpha}{n}
\end{equation}
holds for $\theta$ small enough.

\begin{definition}[Anti-concentration]
  If a probability measure $\mu$ on $S^{n-1}$ satisfies $\eqref{eq: max eigenvalue goal}$ for all wedges $W$ of angle less than $\theta$, we say that it is \emph{anti-concentrated on wedges of angle $\theta$ at level $\alpha$}, or for short, that it satisfies the $\ACW(\theta,\alpha)$ condition. 
\end{definition}

Abusing notation, we say that a measurement matrix $A$ is $\ACW(\theta,\alpha)$ if the uniform measure on its rows is $\ACW(\theta,\alpha)$. Plugging in this definition into Lemma \ref{lem: expected decrement}, we immediately get the following statement.

\begin{lemma}[Expected decrement for $\ACW$ measure] \label{lem: expected decrement, ACW measure}
	Let $\mu$ be a probability measure on the sphere $S^{n-1}$ satisfying the $ACW(\theta,\alpha)$ condition for some $\alpha > 0$ and some acute angle $\theta > 0$. Let $P = P_\mu$ denote the generalized Kaczmarz projection with respect to $\mu$. Then for any $x, z \in \R^n$ such that the angle between them is less than $\theta$, we have
	\begin{equation}
	\E\norm{Pz-x}_2^2 \leq (1-\alpha/n) \norm{z-x}_2^2.
	\end{equation}
\end{lemma}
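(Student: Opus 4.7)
The statement is essentially a clean corollary of the already-established Lemma~\ref{lem: expected decrement} together with the $\ACW$ definition, so my plan is short.

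First, I would apply Lemma~\ref{lem: expected decrement} directly to the pair $(x,z)$ to get
\begin{equation*}
\E\norm{Pz-x}_2^2 \leq \bigl[\,1 - \lambda_{\min}\bigl(\E aa^T - 4\,\E aa^T 1_{W_{x,z}}(a)\bigr)\bigr]\norm{z-x}_2^2.
\end{equation*}
This reduces the problem to producing a lower bound on that minimum eigenvalue. Up to a sign relabelling (i.e.\ we may assume $\inprod{x,z}>0$; otherwise we first relabel $x\mapsto -x$, and note that $W_{x,z}=W_{-x,z}$ so both sides of the inequality are unchanged), the region $W_{x,z}$ is exactly the spherical wedge between the two hemispheres with normals $x$ and $z$, whose opening angle equals the angle between $x$ and $z$.

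Next, by hypothesis this angle is strictly less than $\theta$, so $W_{x,z}$ is a wedge of angle less than $\theta$. The $\ACW(\theta,\alpha)$ assumption on $\mu$ then says precisely that
\begin{equation*}
\lambda_{\min}\bigl(\E aa^T - 4\,\E aa^T 1_{W_{x,z}}(a)\bigr) \geq \frac{\alpha}{n}.
\end{equation*}
Plugging this lower bound into the inequality from the previous step gives $\E\norm{Pz-x}_2^2 \leq (1-\alpha/n)\norm{z-x}_2^2$, as desired.

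There is really no substantive obstacle here; the only care needed is the bookkeeping that identifies $W_{x,z}$ as a wedge of the correct angle (so the $\ACW$ hypothesis is applicable) and the observation that the case $\inprod{x,z}\leq 0$ is vacuous or handled by the same sign symmetry exploited in the definition of the Kaczmarz projection. Everything else is a one-line substitution.
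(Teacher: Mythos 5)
Your proof is correct and is exactly the paper's argument: the paper states this lemma as an immediate consequence of Lemma~\ref{lem: expected decrement} combined with the $\ACW(\theta,\alpha)$ condition applied to the wedge $W_{x,z}$, whose opening angle is the angle between $x$ and $z$ and hence less than $\theta$. (One tiny nit in your moot aside: $W_{-x,z}$ is the \emph{complement} of $W_{x,z}$, not equal to it; but since $\theta$ is acute the case $\inprod{x,z}\leq 0$ never arises, as you yourself note.)
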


We may now imitate the arguments in the previous section to obtain a guarantee for local linear convergence for the generalized randomized Kaczmarz algorithm using such a measure $\mu$.

\begin{theorem}[Linear convergence for $\ACW$ measure] \label{thm: linear convergence, ACW measure}
	Suppose $\mu$ is an $\ACW(\theta,\alpha)$ measure. Let $x$ be a vector in $\R^n$, let $\delta > 0$, and let $x_0$ be an initial estimate to $x$ such that $\norm{x_0 - x}_2 \leq \delta\norm{x}_2$. Let $X_k$ denote the $k$-th step of the generalized randomized Kaczmarz method with respect to the measure $\mu$, defined as in \eqref{def: generalized Kaczmarz}. Let $\Omega$ be the event that for every $k \in \Z_+$, $X_k$ makes an angle less than $\theta$ with $x$. Then for every $k \in \Z_+$,
	\begin{equation} \label{eq: linear convergence for ACW measure}
	\E\sqbracket{\norm{X_k-x}_2^2 1_\Omega} \leq (1-\alpha/n)^k\norm{x_0-x}_2^2.
	\end{equation}
	Furthermore, $\P(\Omega^c) \leq (\delta/\sin\theta)^2$. 
\end{theorem}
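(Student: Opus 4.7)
The plan is to mirror the proof of Theorem \ref{thm: linear convergence, unlimited measurements} essentially line by line, replacing the basin half-angle $\pi/8$ with $\theta$, the constant $\alpha_\sigma$ with $\alpha$, and the uniform-measure one-step estimate (Lemma \ref{lem: expected decrement, uniform measure}) with the abstract $\ACW$ decrement (Lemma \ref{lem: expected decrement, ACW measure}). Concretely, let $B \subset \R^n$ be the cone of points making angle strictly less than $\theta$ with $x$, let $\mathcal{F}_k := \sigma(a_1, \ldots, a_k)$, and define the stopping time $\tau := \min\braces{k \st X_k \notin B}$. Then $\Omega = \braces{\tau = \infty}$, so the two conclusions of the theorem translate into bounds on $\E\sqbracket{\norm{X_k-x}_2^2 1_{\tau > k}}$ and on $\P(\tau < \infty)$.

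For the geometric decay bound, on the event $\braces{X_k = x_k}$ with $x_k \in B$, Lemma \ref{lem: expected decrement, ACW measure} gives $\E\sqbracket{\norm{X_{k+1}-x}_2^2 \st X_k = x_k} \leq (1-\alpha/n)\norm{x_k-x}_2^2$. Combining this with the inclusion $\braces{\tau > k+1} \subset \braces{\tau > k}$, the $\mathcal{F}_k$-measurability of $1_{\braces{\tau > k}}$, and the Markov property of $(X_k)$ — exactly as in the four-line display at the start of the proof of Theorem \ref{thm: linear convergence, unlimited measurements} — yields
$$\E\sqbracket{\norm{X_{k+1}-x}_2^2 1_{\tau > k+1}} \leq (1-\alpha/n)\, \E\sqbracket{\norm{X_k-x}_2^2 1_{\tau > k}}.$$
Iterating from $X_0 = x_0$ and using $1_{\tau = \infty} \leq 1_{\tau > k}$ yields \eqref{eq: linear convergence for ACW measure}.

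For the tail estimate, set $Y_k := \norm{X_{\tau \wedge k} - x}_2^2$. The same case split as in the proof of Theorem \ref{thm: linear convergence, unlimited measurements} — $Y_{k+1} = Y_k$ on $\braces{\tau \leq k}$, and a $(1-\alpha/n)$-contraction from Lemma \ref{lem: expected decrement, ACW measure} on $\braces{\tau > k}$ since $X_k \in B$ there — shows that $(Y_k)$ is a nonnegative supermartingale. Introduce the auxiliary stopping time $T := \inf\braces{k \st \norm{X_k-x}_2 \geq \sin\theta \cdot \norm{x}_2}$. A one-line law-of-cosines computation, minimizing $\norm{y-x}_2^2 = \norm{y}_2^2 + \norm{x}_2^2 - 2\norm{y}_2 \norm{x}_2 \cos\theta$ over $\norm{y}_2$ (the optimum is $\norm{x}_2 \sin\theta$, attained at $\norm{y}_2 = \norm{x}_2 \cos\theta$), shows that $X_k \notin B$ forces $\norm{X_k-x}_2 \geq \sin\theta \cdot \norm{x}_2$, so $T \leq \tau$. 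Therefore $\braces{\tau < \infty} \subset \braces{T < \infty} = \braces{\sup_{k \geq 1} Y_k \geq \sin^2\theta \cdot \norm{x}_2^2}$, and the supermartingale maximal inequality applied to $(Y_k)$ gives $\P(\tau < \infty) \leq \E Y_0 / (\sin^2\theta \cdot \norm{x}_2^2) \leq (\delta/\sin\theta)^2$.

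Because Lemma \ref{lem: expected decrement, ACW measure} has already absorbed the geometric calculation into the $\ACW$ hypothesis, I do not expect any genuine obstacle: each step of the proof of Theorem \ref{thm: linear convergence, unlimited measurements} carries over verbatim. The one non-bookkeeping step that is worth writing out carefully is the inclusion $\braces{X_k \notin B} \subset \braces{\norm{X_k-x}_2 \geq \sin\theta \cdot \norm{x}_2}$, since with the specific value $\theta = \pi/8$ in the earlier proof the corresponding geometric claim was simply asserted and here it needs the short optimization recorded above.
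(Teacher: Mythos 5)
Your proposal is correct and follows essentially the same route as the paper, whose proof of this theorem is literally ``repeat the proof of Theorem \ref{thm: linear convergence, unlimited measurements}'' with the basin angle $\pi/8$ replaced by $\theta$, the stopping times $\tau$ and $T$ redefined accordingly, and the supermartingale maximal inequality applied to $Y_k = \norm{X_{\tau\wedge k}-x}_2^2$. Your explicit law-of-cosines verification that $X_k \notin B$ forces $\norm{X_k - x}_2 \geq \sin\theta\,\norm{x}_2$ is a welcome addition, since the paper asserts the corresponding geometric fact without proof.
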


\begin{proof}
	We repeat the proof of Theorem \ref{thm: linear convergence, unlimited measurements}. Let $B_\mu \subset S^{n-1}$ be the region on the sphere comprising all points making an angle less than or equal to $\pi/8$ with $x$. Define stopping times $\tau_\mu$ and $T_\mu$ as the earliest times that $X_k \notin B_\mu$ and $\norm{X_k - x_0}_2 \geq \sin(\theta)\norm{x}_2$ respectively. Again, $Y_k := X_{k\wedge\tau_\mu}$ is a supermartingale, so we may use the supermartingale inequality to bound the probability of $\Omega^c$. Conditioned on the event $\Omega$, we may iterate the bound given by Lemma \ref{lem: expected decrement, ACW measure} to obtain \eqref{eq: linear convergence for ACW measure}.
\end{proof}

\begin{corollary} \label{cor: linear convergence for ACW measure, high probability}
	Fix $\epsilon > 0$, $0 < \delta_1 \leq 1/2$, and $0 < \delta_2 \leq 1$. In the setting of Theorem \ref{thm: linear convergence, ACW measure}, suppose that $\norm{x_0-x}_2 \leq \sqrt{\delta_1}\sin(\theta)\norm{x}_2$. Then with probability at least $1-\delta_1-\delta_2$, if $k \geq (\log(2/\epsilon)+\log(1/\delta_2))n/\alpha$ then $\norm{X_k-x}_2^2 \leq \epsilon\norm{x_0-x}_2^2$.
\end{corollary}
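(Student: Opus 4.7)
The plan is to essentially replay the proof of the corollary immediately preceding Theorem \ref{thm: linear convergence, ACW measure}, substituting the $\ACW$ parameters $(\theta,\alpha)$ for $(\pi/8,\alpha_\sigma)$ and using Theorem \ref{thm: linear convergence, ACW measure} in place of Theorem \ref{thm: linear convergence, unlimited measurements}. Since the structure of the two statements is identical, no genuinely new idea is needed; the only thing to verify is that the constants line up.

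The first step is to control the failure probability of the ``good event'' $\Omega$. By the second conclusion of Theorem \ref{thm: linear convergence, ACW measure}, we have $\P(\Omega^c) \leq (\norm{x_0-x}_2/(\sin\theta \cdot \norm{x}_2))^2$, and the hypothesis $\norm{x_0-x}_2 \leq \sqrt{\delta_1}\sin(\theta)\norm{x}_2$ immediately yields $\P(\Omega^c) \leq \delta_1$. Since $\delta_1 \leq 1/2$, we also obtain $\P(\Omega) \geq 1/2$, which lets us convert the mixed expectation $\E[\norm{X_k-x}_2^2 \mathbf{1}_\Omega]$ into a conditional expectation at the cost of a factor of $2$:
\begin{equation*}
\E\sqbracket{\norm{X_k - x}_2^2 \st \Omega} \leq 2\,\E\sqbracket{\norm{X_k - x}_2^2 \mathbf{1}_\Omega} \leq 2(1-\alpha/n)^k \norm{x_0 - x}_2^2,
\end{equation*}
where the last inequality is the first conclusion of Theorem \ref{thm: linear convergence, ACW measure}.

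Next I would apply Markov's inequality conditionally on $\Omega$ to get
\begin{equation*}
\P\paren*{\norm{X_k-x}_2^2 > \epsilon\norm{x_0-x}_2^2 \st \Omega} \leq \frac{2(1-\alpha/n)^k}{\epsilon},
\end{equation*}
and check that the prescribed choice $k \geq (\log(2/\epsilon)+\log(1/\delta_2))n/\alpha$, combined with the elementary bound $1-\alpha/n \leq e^{-\alpha/n}$, makes the right-hand side at most $\delta_2$. Finally, I would combine the two failure modes via a union bound: the event $\{\norm{X_k-x}_2^2 \leq \epsilon\norm{x_0-x}_2^2\}$ contains $\Omega \cap \{\text{Markov's inequality does not fail}\}$, and the latter has probability at least $(1-\delta_1)(1-\delta_2) \geq 1 - \delta_1 - \delta_2$.

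There is essentially no obstacle here; the only mild subtlety worth flagging is that the conversion from $\E[\cdot\, \mathbf{1}_\Omega]$ to $\E[\cdot \mid \Omega]$ is precisely what forces the $\delta_1 \leq 1/2$ hypothesis, and that the constant $2$ picked up in this conversion is what produces the $\log(2/\epsilon)$ (rather than $\log(1/\epsilon)$) appearing in the iteration count.
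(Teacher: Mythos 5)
Your proposal is correct and is essentially identical to the paper's argument: the paper proves this corollary implicitly by the same replay of the unlimited-measurements corollary, with $(\theta,\alpha)$ replacing $(\pi/8,\alpha_\sigma)$ and the event $\Omega$ playing the role of $\{\tau=\infty\}$. All the constants (the factor $2$ from $\P(\Omega)\geq 1/2$, the $\log(2/\epsilon)$ in the iteration count, and the product bound $(1-\delta_1)(1-\delta_2)\geq 1-\delta_1-\delta_2$) line up exactly as you describe.
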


\section{$\ACW(\theta,\alpha)$ condition for finitely many uniform measurements} \label{sec: ACW condition}

Following the theory in the previous section, we see that to prove linear convergence from finitely many uniform measurements, it suffices to show that the measurement matrix $A$ is $\ACW(\theta,\alpha)$ for some $\theta$ and $\alpha$.

For a \emph{fixed} wedge $W$, we can easily achieve \eqref{eq: max eigenvalue goal} by using a standard matrix concentration theorem. By taking a union bound, we can guarantee that it holds over exponentially many wedges with high probability. However, the function $W \mapsto \lambda_{\max}(\E aa^T 1_{W}(a))$ is not Lipschitz with respect to any natural parametrization of wedges in $S^{n-1}$, so a naive net argument fails. To get around this, we use VC theory, metric entropy, and a chaining theorem from \cite{Dirksen2015}.

First, we will use the theory of VC dimension and growth functions to argue that all wedges contain approximately the right fraction of points. This is the content of lemma \ref{lem: uniform conc over wedges}. In order to prove this, a fair number of standard definitions and results are required. These are all provided in Appendix \ref{sec: growth fns and VC dim}.

\begin{lemma}[Uniform concentration of empirical measure over wedges] \label{lem: uniform conc over wedges}
	Fix an acute angle $\theta > 0$. Let $\mathcal{W}_\theta$ denote the collection of all wedges of $S^{n-1}$ of angle less than $\theta$. Suppose $A$ is an $m$ by $n$ matrix with rows $a_i$ that are independent uniform random vectors on $S^{n-1}$, and let $\mu_A = \frac{1}{m}\sum_{i=1}^m \delta_{a_i}$. Then if $m \geq (4\pi/\theta)^2(2n\log(2em/n)+\log(2/\delta))$, with probability at least $1 - \delta$, we have
	\begin{equation*}
	\sup_{W \in \mathcal{W}} \mu_A(W) \leq 2\theta/\pi.
	\end{equation*}
\end{lemma}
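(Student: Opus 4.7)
The plan is to apply standard VC theory to the class $\mathcal{W}$ of all spherical wedges on $S^{n-1}$. Each wedge $W_{x,z}$ is the symmetric difference of two open half-spaces through the origin, namely those with normal vectors $x$ and $z$. The class of such half-spaces has VC dimension at most $n$, so by the Sauer--Shelah lemma its growth function is bounded by $(em/n)^n$. Since the number of distinct traces on $m$ points realizable by pairwise symmetric differences is at most the square of the growth function of the original class, we obtain $\Pi_{\mathcal{W}}(m) \leq (em/n)^{2n}$.

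Next, I would observe that for any wedge $W$ of angle $\theta' \leq \theta$, projection onto the $2$-plane spanned by its two defining normal vectors gives $\sigma(W) = \theta'/\pi \leq \theta/\pi$, where $\sigma$ denotes the uniform measure on $S^{n-1}$: the angular coordinate of the projected vector is uniform on the circle, and the wedge corresponds to two opposite sectors of total angular width $2\theta'$. Consequently, to deduce $\mu_A(W) \leq 2\theta/\pi$ for all $W \in \mathcal{W}_\theta$, it suffices to establish the uniform one-sided deviation
\[
\sup_{W \in \mathcal{W}}\bigl(\mu_A(W) - \sigma(W)\bigr) \leq \theta/\pi.
\]

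The final step is to invoke a Vapnik--Chervonenkis-type uniform concentration inequality (of the form recalled in Appendix~\ref{sec: growth fns and VC dim}): for any class $\mathcal{F}$ of sets with growth function $\Pi_{\mathcal{F}}$,
\[
\P\bigl(\sup_{F \in \mathcal{F}} \abs*{\mu_A(F) - \sigma(F)} > \epsilon\bigr) \leq C\, \Pi_{\mathcal{F}}(2m)\, \exp(-c\, m\, \epsilon^2)
\]
for absolute constants $C, c > 0$. Setting $\epsilon = \theta/\pi$, substituting $\Pi_{\mathcal{W}}(2m) \leq (2em/n)^{2n}$, and requiring the right-hand side to be at most $\delta$ yields a sample complexity of order $(\pi/\theta)^2 \bigl(n \log(em/n) + \log(1/\delta)\bigr)$, matching, up to absolute constants, the hypothesis $m \geq (4\pi/\theta)^2 \bigl(2n \log(2em/n) + \log(2/\delta)\bigr)$ of the lemma.

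The main obstacle is the bookkeeping: verifying the $O(n)$ VC-dimension bound for wedges via the symmetric-difference representation, and tracking constants through the VC inequality carefully enough to recover the stated prefactor of $(4\pi/\theta)^2$. Both are routine given the standard VC machinery, but the constant-tracking step is where one must be attentive to which form of the VC inequality is used (e.g.\ with or without the factor of $2m$ from double-sample symmetrization) to arrive at the precise sample complexity in the lemma.
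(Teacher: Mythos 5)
Your proposal is correct and follows essentially the same route as the paper: bound the growth function of wedges by writing them as symmetric differences of hemispheres (VC dimension $O(n)$ plus Sauer--Shelah), apply the Vapnik--Chervonenkis uniform deviation inequality with $u = \theta/\pi$, and combine with $\sigma(W) \leq \theta/\pi$ to conclude $\mu_A(W) \leq 2\theta/\pi$. The only cosmetic difference is that the paper states the hemisphere VC bound as $n+1$ while you use $n$ for homogeneous half-spaces; either suffices for the claimed sample complexity.
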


\begin{proof}
	Using VC theory \cite{Vapnik1971a}, we have
	\begin{equation} \label{eq: VC union bound}
	\P\paren{\sup_{W \in \mathcal{W}} \abs{\mu_A(W) - \sigma(W)} \geq u } \leq 4\Pi_{\mathcal{W}_\theta}(2m)\exp(-mu^2/16)
	\end{equation}
	whenever $m \geq 2/u^2$. Let $\mathcal{S}$ be the collection of all sectors of any angle, and let $\mathcal{H}$ denote the collection of all hemispheres. By Claim \ref{clm: VC dim for hemispheres} and the Sauer-Shelah lemma (Lemma \ref{lem: sauer-shelah}) relating VC dimension to growth functions, we have $\Pi_\mathcal{H}(2m) \leq (2em/n)^n$.
	
	Next, notice that using the notation in \eqref{def: symmetric difference}, we have $\mathcal{W} = \mathcal{H}\triangle\mathcal{H}$. As such, we may apply Claim \ref{clm: growth function bound for symmetric difference} to get
	\begin{equation*}
	\Pi_\mathcal{W}(2m) \leq (2em/n)^{2n}.
	\end{equation*}
	
	We now plug this bound into the right hand side of \eqref{eq: VC union bound}, set $u = \theta/\pi$, and simplify to get
	\begin{equation*}
	\P\paren{\sup_{W \in \mathcal{W}} \abs{\mu_A(W) - \sigma(W)} \geq \theta/\pi } \leq 4\exp(2n\log(2em/n)-m(\theta/\pi)^2/16).
	\end{equation*}
	
	Our assumption implies that $m \geq 2/(\theta/\pi)^2$ so the bound holds, and also that the bound is less than $\delta$. Finally, since $\mathcal{W}_\theta \subset \mathcal{W}$, on the complement of this event, any $W \in \mathcal{W}_\theta$ satisfies
	\begin{equation*}
	\mu_A(W) \leq \sigma(W) + \theta/\pi \leq 2\theta/\pi
	\end{equation*}
	as we wanted.
\end{proof}

For every wedge $W \in \mathcal{W}_\theta$, we may associate the configuration vector
\begin{equation*}
s_{W,A} := \paren{1_W(a_1),1_W(a_2),\ldots,1_W(a_m)}.
\end{equation*}
We can write
\begin{equation} \label{eq: max eigenvalue with weights}
\lambda_{\max}\paren{\E_{\mu_A}aa^T1_W(a)} = \frac{1}{m}\lambda_{\max}\paren{A^TS_{W,A}A},
\end{equation}
where $S_{W,A} = \diag(s_{W,A})$. $S_{W,A}$ is thus a selector matrix, and if we condition on the good event given to us by the previous theorem, it selects at most a $2\theta/\pi$ fraction of the rows of $A$. This means that $s_{W,A} \in \mathcal{S}_{2\theta/\pi}$, where we define
\begin{equation*}
\mathcal{S}_\tau := \braces{d \in \braces{0,1}^m \st \inprod{d,1} \leq \tau \cdot m }.
\end{equation*}

We would like to majorize the quantity in \eqref{eq: max eigenvalue with weights} uniformly over all wedges $W$ by the quantity $\frac{1}{4}\lambda_{\min}(\E_{\mu_A} aa^T)$. In order to do this, we define a stochastic process $(Y_{s,v})$ indexed by $s \in \mathcal{S}_{2\theta/\pi}$ and $v \in B^n_2$, setting
\begin{equation} \label{def: Y process}
Y_{s,v} := n v^T A^T\diag(s)A = \sum_{i=1}^m s_i \inprod{\sqrt{n}a_i,v}^2.
\end{equation}
If we condition on the good set in Lemma \ref{lem: uniform conc over wedges}, it is clear that
\begin{equation*}
\sup_{W \in \mathcal{W}_\theta} \frac{1}{m}\lambda_{\max}\paren{A^TS_{W,A}A} \leq \frac{1}{nm}\sup_{s \in \mathcal{S}_{2\theta/\pi},v \in B^n_2} Y_{s,v},
\end{equation*}
so it suffices to bound the quantity on the right. We will do this using a slightly sophisticated form of chaining, which requires us to make a few definitions.

Let $(T,d)$ be a metric space. A sequence $\mathcal{T} = (T_k)_{k \in \Z_+}$ of subsets of $T$ is called \emph{admissible} if $\abs{T_0} = 1$, and $\abs{T_k} \leq 2^{2^k}$ for all $k \geq 1$. For any $0 < \alpha < \infty$, we define the \emph{$\gamma_\alpha$ functional} of $(T,d)$ to be
\begin{equation*}
\gamma_\alpha(T,d) := \inf_{\mathcal{T}}\sup_{t \in T} \sum_{k=0}^\infty 2^{k/\alpha} d(t,T_k).
\end{equation*}

Let $d_1$ and $d_2$ be two metrics on $T$. We say that a process $(Y_t)$ has \emph{mixed tail increments} with respect to $(d_1,d_2)$ if there are constants $c$ and $C$ such that for all $s, t \in T$, we have the bound
\begin{equation} \label{def: mixed tail increments}
\P\paren{\abs{Y_s-Y_t} \geq c(\sqrt{u}d_2(s,t) + ud_1(s,t)) } \leq Ce^{-u}.
\end{equation}

\begin{remark}
	In \cite{Dirksen2015}, processes with mixed tail increments are defined as above but with the further restriction that $c=1$ and $C=2$. This is not necessary for the result that we need (Lemma \ref{lem: sup bound for mixed tail process}) to hold. The indeterminacy of $c$ and $C$ gets absorbed into the final constant in the bound.
\end{remark}

\begin{lemma}[Theorem 5, \cite{Dirksen2015}] \label{lem: sup bound for mixed tail process}
	If $(Y_t)_{t \in T}$ has mixed tail increments, then there is a constant $C$ such that for any $u \geq 1$, with probability at least $1 - e^{-u}$,
	\begin{equation*}
	\sup_{t \in T}\abs{Y_t - Y_{t_0}} \leq C\paren{\gamma_2(T,d_2) + \gamma_1(T,d_1) + \sqrt{u}\diam(T,d_2) + u\diam(T,d_1)}.
	\end{equation*}
\end{lemma}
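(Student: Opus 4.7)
The plan is to apply Talagrand's generic chaining technique, modified to accommodate the mixed sub-Gaussian/sub-exponential tail behavior. First, I would pick admissible sequences $(T_k^{(1)})_k$ and $(T_k^{(2)})_k$ of subsets of $T$ that nearly attain the infima defining $\gamma_2(T, d_2)$ and $\gamma_1(T, d_1)$ respectively, and then merge them (via unions, shifting the index by one) into a single admissible sequence $(T_k)_k$ that simultaneously approximates every $t \in T$ in both metrics up to a constant factor. For each $t \in T$, choose $\pi_k(t) \in T_k$ with $d_i(\pi_k(t), t) \leq 2\, d_i(t, T_k)$ for $i = 1, 2$, fix $t_0 \in T_0$, and telescope
\[ Y_t - Y_{t_0} = \sum_{k=0}^{\infty} \paren*{Y_{\pi_{k+1}(t)} - Y_{\pi_k(t)}}. \]

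Next, for each level $k$ apply the mixed-tail assumption with a level-dependent deviation parameter $v_k := u + c^{*} \cdot 2^k$, where $c^{*}$ is a large constant, to every pair $(s, s') \in T_k \times T_{k+1}$. Each bad event has probability at most $C e^{-v_k}$, and the number of such pairs at level $k$ is bounded by $|T_k||T_{k+1}| \leq 2^{3 \cdot 2^k}$, so the total failure probability is
\[ \sum_{k=0}^{\infty} C \cdot 2^{3 \cdot 2^k}\, e^{-u - c^{*} \cdot 2^k} \lesssim e^{-u}, \]
provided $c^{*} > 3 \log 2$. On the complement of this bad event, each chain link is controlled by $c\paren*{\sqrt{v_k}\, d_2(\pi_{k+1}(t), \pi_k(t)) + v_k\, d_1(\pi_{k+1}(t), \pi_k(t))}$, which I would split into a \emph{chaining part} carrying factors $2^{k/2}$ and $2^k$, and a \emph{deviation part} carrying $\sqrt{u}$ and $u$.

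Summing the chaining part, using the triangle inequality $d_i(\pi_{k+1}(t), \pi_k(t)) \leq d_i(t, T_k) + d_i(t, T_{k+1})$ together with the definitions of the $\gamma$-functionals, yields a contribution of at most $C\paren*{\gamma_2(T, d_2) + \gamma_1(T, d_1)}$. The main obstacle will be the deviation part: naively, $\sqrt{u} \sum_k d_2(\pi_{k+1}(t), \pi_k(t))$ grows with the depth of the chain rather than telescoping down to the diameter. The standard remedy is a two-regime argument: split the chain at a threshold level $k^{*}$ with $2^{k^{*}} \sim u$, use the $v_k$-bound only for $k \geq k^{*}$, and collapse the initial coarse portion of the chain into a single application of the mixed-tail bound at deviation level $u$, whose contribution is at most a constant times $\sqrt{u}\, \diam(T, d_2) + u\, \diam(T, d_1)$. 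Careful bookkeeping of the resulting constants then produces the clean inequality stated; this is the technical heart of Theorem~5 in Dirksen~\cite{Dirksen2015}.
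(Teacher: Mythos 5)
The paper does not prove this lemma at all: it is imported verbatim as Theorem~5 of Dirksen's paper (see also the accompanying remark about absorbing the constants $c$ and $C$), so there is no internal proof to compare against. Your sketch is, in substance, a correct reconstruction of Dirksen's actual generic-chaining argument: merging near-optimal admissible sequences for $\gamma_2(T,d_2)$ and $\gamma_1(T,d_1)$ with an index shift, telescoping along $\pi_k(t)$, using level-dependent deviation parameters $v_k = u + c^*2^k$ so that the union bound over $\abs{T_k}\abs{T_{k+1}} \leq 2^{3\cdot 2^k}$ pairs sums to $O(e^{-u})$, and then handling the fact that the $\sqrt{u}\,d_2$ and $u\,d_1$ contributions do not telescope by splitting the chain at $2^{k^*}\sim u$. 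The one step you state a bit too casually is the ``collapse'' of the coarse portion: $\pi_{k^*}(t)$ ranges over $T_{k^*}$, which can have as many as $2^{2^{k^*}} \approx 2^{Cu}$ elements, so the single application of the mixed-tail bound to the pairs $(\pi_{k^*}(t),t_0)$ must itself be a union bound at deviation level $c'u$ for a sufficiently large constant $c'$ (not just $u$) to beat the cardinality; this only changes the final constant, and you do flag the bookkeeping, but it is the place where the argument would silently fail if one literally used deviation level $u$. You should also note that for $k\geq k^*$ one has $2^{k/2}\gtrsim\sqrt{u}$, so the residual $\sqrt{u}\,d_2$ and $u\,d_1$ terms in $v_k$ are absorbed into the $2^{k/2}$ and $2^k$ chaining weights rather than summed separately. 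With those two clarifications your outline is a faithful proof of the cited result.
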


At first glance, the $\gamma_2$ and $\gamma_1$ quantities seem mysterious and intractable. We will show however, that they can be bounded by more familiar quantities that are easily computable in our situation. Let us postpone this for the moment, and first show that our process $(Y_{s,v})$ has mixed tail increments.

\begin{lemma}[$(Y_{s,v})$ has mixed tail increments]
	Let $(Y_{s,v})$ be the process defined in \eqref{def: Y process}. Define the metrics $d_1$ and $d_2$ on $\mathcal{S}_{2\theta/\pi} \cross B^n_2$ using the norms $\tripnorm{(w,v)}_1 = \max\braces{\norm{w}_\infty,\norm{v}_2}$ and $\tripnorm{(w,v)}_2 = \max\braces{\norm{w}_2,\sqrt{2m\theta/\pi}\norm{v}_2}$. Then the process has mixed tail increments with respect to $(d_1,d_2)$.
\end{lemma}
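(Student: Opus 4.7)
The plan is to decompose the increment into two Bernstein-amenable pieces and treat each by applying the scalar Bernstein inequality for sums of independent sub-exponentials. Write
\begin{equation*}
Y_{s,v} - Y_{s',v'} = \underbrace{\sum_{i=1}^m (s_i - s'_i)\inprod{\sqrt{n}a_i,v}^2}_{\mathrm{I}} \; + \; \underbrace{\sum_{i=1}^m s'_i\bigl(\inprod{\sqrt{n}a_i,v}^2 - \inprod{\sqrt{n}a_i,v'}^2\bigr)}_{\mathrm{II}}.
\end{equation*}
The distributional input is standard: for $a_i \sim \Unif(S^{n-1})$, the linear functional $\inprod{\sqrt{n}a_i,u}$ is sub-Gaussian with $\psi_2$-norm $\lesssim \|u\|_2$ (this follows from concentration of Lipschitz functions on the sphere), so $\inprod{\sqrt{n}a_i,u}^2$ is sub-exponential with $\psi_1$-norm $\lesssim \|u\|_2^2$ and the bilinear product $\inprod{\sqrt{n}a_i,u}\inprod{\sqrt{n}a_i,w}$ is sub-exponential with $\psi_1$-norm $\lesssim \|u\|_2\|w\|_2$.

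For piece $\mathrm{I}$, with $v$ fixed and $\|v\|_2\le 1$, each summand has $\psi_1$-norm $\lesssim |s_i-s'_i|$. The Bernstein inequality applied to the centered sum yields, for every $u\ge 1$,
\begin{equation*}
\P\bigl(|\mathrm{I}-\E\mathrm{I}| \gtrsim \sqrt{u}\,\|s-s'\|_2 + u\,\|s-s'\|_\infty\bigr) \le 2e^{-u},
\end{equation*}
and the right-hand terms are bounded respectively by $\sqrt{u}\,d_2((s,v),(s',v'))$ and $u\,d_1((s,v),(s',v'))$. For piece $\mathrm{II}$, factor $\inprod{\sqrt{n}a_i,v}^2 - \inprod{\sqrt{n}a_i,v'}^2 = \inprod{\sqrt{n}a_i,v-v'}\inprod{\sqrt{n}a_i,v+v'}$ and deploy the sparsity of $s'\in \mathcal{S}_{2\theta/\pi}$: because $s'$ is a 0-1 vector, $\|s'\|_2^2=\|s'\|_1\le 2m\theta/\pi$. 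Using $\|v+v'\|_2\le 2$, the Bernstein variance proxy is $\sum_i (s'_i)^2\|v-v'\|_2^2\|v+v'\|_2^2 \lesssim (2m\theta/\pi)\|v-v'\|_2^2$, and the uniform bound on individual summands is $\lesssim \|v-v'\|_2$. Bernstein then gives
\begin{equation*}
\P\bigl(|\mathrm{II}-\E\mathrm{II}| \gtrsim \sqrt{u\cdot 2m\theta/\pi}\,\|v-v'\|_2 + u\,\|v-v'\|_2\bigr) \le 2e^{-u},
\end{equation*}
which is exactly the form dictated by the $v$-coordinates of $d_2$ and $d_1$.

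Combining the two centered tail bounds via the triangle inequality and rescaling the constants yields the mixed-tail increments property for the centered version of the process. To convert this to a statement about the uncentered process $(Y_{s,v})$, I would bound the deterministic shift $\E Y_{s,v} - \E Y_{s',v'} = \|v\|_2^2\inprod{s-s',1} + \inprod{s',1}(\|v\|_2^2-\|v'\|_2^2)$ by the same two metrics: the sparsity estimate $|\inprod{s',1}| \le 2m\theta/\pi$ controls the second summand by $(2m\theta/\pi)\|v-v'\|_2$, matching the $\sqrt{2m\theta/\pi}\,\|v-v'\|_2$ term in $d_2$, and Cauchy--Schwarz together with the 0-1 structure handles the first summand. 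Both contributions are absorbed by enlarging the constant $c$. The main technical obstacle is making the variance estimate in piece $\mathrm{II}$ sufficiently tight that the unusual factor $\sqrt{2m\theta/\pi}$ in $d_2$ emerges cleanly; without invoking the $\|s'\|_1$ sparsity at exactly the right moment, one would pick up a stray factor of $\sqrt{m}$, rendering the subsequent chaining estimate useless.
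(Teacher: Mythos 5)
Your decomposition and the two Bernstein applications are exactly the paper's argument: the paper also splits the increment as $(Y_{s,v}-Y_{s',v})+(Y_{s',v}-Y_{s',v'})$, bounds the $\psi_1$-norms the same way (using $\norm{s'}_1\le 2m\theta/\pi$ to produce the $\sqrt{2m\theta/\pi}$ factor in the $v$-direction), and combines the two tails by the triangle inequality together with a max/sum comparison of the norms. Up to that point your write-up matches the paper and is correct.

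The problem is your final paragraph. The deterministic shift $\E Y_{s,v}-\E Y_{s',v'}$ is \emph{not} absorbed by enlarging $c$. For the second summand, $\abs{\inprod{s',1}}\cdot\abs{\norm{v}_2^2-\norm{v'}_2^2}$ can be as large as $2(2m\theta/\pi)\norm{v-v'}_2=2\sqrt{2m\theta/\pi}\cdot\paren{\sqrt{2m\theta/\pi}\norm{v-v'}_2}$, i.e.\ it exceeds the relevant term of $d_2$ by the factor $\sqrt{2m\theta/\pi}$, which grows with $m$. The first summand is no better: for $0$--$1$ vectors, $\abs{\inprod{s-s',1}}$ can equal $\norm{s-s'}_1=\norm{s-s'}_2^2\le\sqrt{4m\theta/\pi}\,\norm{s-s'}_2$, again off from $\sqrt{u}\,d_2$ at $u\asymp 1$ by a factor growing with $m$. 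A concrete violation: take $s'=0$, $s$ with $2m\theta/\pi$ ones, and $v=v'$ a unit vector; then $Y_{s,v}-Y_{s',v'}$ concentrates around its mean $2m\theta/\pi$, whereas $c(\sqrt{u}\,d_2+u\,d_1)=c(\sqrt{u}\norm{s}_2+u)$ is only of order $\sqrt{um\theta/\pi}+u$, so \eqref{def: mixed tail increments} fails for the uncentered process for all $u\ll m\theta/\pi$. (The paper sidesteps this by silently applying Bernstein to uncentered sums, so strictly speaking it, too, only establishes the mixed-tail property for the centered process.) The correct resolution is not to push the mean through the increment condition but to run the chaining on $\tilde Y_{s,v}:=Y_{s,v}-\E Y_{s,v}$ --- which is precisely what your two Bernstein bounds control --- and then add back $\sup_{s,v}\E Y_{s,v}\le 2m\theta/\pi\le\sqrt{2\theta/\pi}\,m$ once, at the point where Theorem \ref{thm: bound on sup of Y} is assembled; that term is of the same order as the target bound and is harmless there.
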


\begin{proof}
	The main tool that we use is Bernstein's inequality \cite{Vershynin2011a} for sums of subexponential random variables. Observe that each $\sqrt{n}a_i$ is a subgaussian random vector with bounded subgaussian norm $\norm{\sqrt{n}a_i}_{\psi_2} \leq C$, where $C$ by an absolute constant. As such, for any $v \in B^n_2$, $\inprod{\sqrt{n}a_i,v}^2$ is a subexponential random variable with bounded subexponential norm $\norm{\inprod{\sqrt{n}a_i,v}^2}_{\psi_1} \leq C^2$ \cite{Vershynin2011a}.
	
	Now fix $v$ and let $s, s' \in \mathcal{S}_{2\theta/\pi}$. Then
	\begin{equation*}
	Y_{s,v} - Y_{s',v} = \sum_{i=1}^m (s_i- s_i') \inprod{\sqrt{n}a_i,v}^2.
	\end{equation*}
	Using Bernstein, we have
	\begin{equation} \label{eq: Bernstein for s}
	\P\paren{\abs{Y_{s,v} - Y_{s',v}} \geq u } \leq 2\exp(-c\min\braces{u^2/\norm{s-s'}_2^2,u/\norm{s-s'}_\infty}).
	\end{equation}
	
	Similarly, if we fix $s \in \mathcal{S}_{2\theta/\pi}$ and let $v, v' \in B^n_2$, then
	\begin{align*}
	Y_{s,v} - Y_{s,v'} & = \sum_{i=1}^m s_i (\inprod{\sqrt{n}a_i,v}^2 - \inprod{\sqrt{n}a_i,v'}^2 ) \\
	& = \sum_{i=1}^m s_i \inprod{\sqrt{n}a_i,v-v'}\inprod{\sqrt{n}a_i,v+v'}.
	\end{align*}
	We can bound the subexponential norm of each summand via
	\begin{align*}
	\norm{s_i\inprod{\sqrt{n}a_i,v-v'}\inprod{\sqrt{n}a_i,v+v'}}_{\psi_1} & \leq 	s_i\norm{\inprod{\sqrt{n}a_i,v-v'}}_{\psi_2} \cdot 	\norm{\inprod{\sqrt{n}a_i,v+v'}}_{\psi_1} \\
	& \leq Cs_i\norm{v-v'}_2.
	\end{align*}
	As such,
	\begin{equation*}
	\sum_{i=1}^m \norm{s_i\inprod{\sqrt{n}a_i,v-v'}\inprod{\sqrt{n}a_i,v+v'}}_{\psi_1}^2 \leq C \norm{v-v'}_2^2\sum_{i=1}^m s_i^2 \leq C(2\theta/\pi)m \norm{v-v'}_2^2.
	\end{equation*}
	Applying Bernstein as before, we get
	\begin{equation} \label{eq: Bernstein for v}
	\P\paren{\abs{Y_{s,v} - Y_{s,v'}} \geq u } \leq 2\exp(-c\min\braces{u^2/ (2\theta/\pi)m\norm{v-v'}_2^2,u/\norm{v-v'}_2}).
	\end{equation}

	Now, recall the simple observation that for any numbers $a, b \in \R$, we have
	\begin{equation*}
	\max\braces{\abs{a},\abs{b}} \leq \abs{a} + \abs{b} \leq 2\max\braces{\abs{a},\abs{b}}.
	\end{equation*}
	As such, for any $u > 0$, given $s,s' \in \mathcal{S}_{2\theta/\pi}$, $v,v' \in B^n_2$, we have
	\begin{align*}
	\sqrt{u}\tripnorm{(s,v) - (s',v')}_2 + u\tripnorm{(s,v) - (s',v')}_1 & \geq \frac{1}{2}(\sqrt{u}\norm{s-s'}_2 + \sqrt{u}\sqrt{2m\theta/\pi}\norm{v-v'}_2 + u\norm{s-s'}_\infty \nonumber \\
	& \quad \quad + u\norm{v-v'}_2) \\
	& \geq \frac{1}{2}\max\{\sqrt{u}\norm{s-s'}_2 + u\norm{s-s'}_\infty, \nonumber \\
	& \quad\quad \sqrt{u}\sqrt{2m\theta/\pi}\norm{s-s'}_2 + u\norm{v-v'}_2 \}.
	\end{align*}
	
	Since
	\begin{equation*}
	\abs{Y_{s,v} - Y_{s',v'}} \leq \abs{Y_{s,v} - Y_{s',v}} + \abs{Y_{s',v} - Y_{s',v'}},
	\end{equation*}
	we have that if
	\begin{equation*}
	\abs{Y_{s,v} - Y_{s',v'}} \geq c\paren*{\sqrt{u}\tripnorm{(s,v) - (s',v')}_2 + u\tripnorm{(s,v) - (s',v')}_1},
	\end{equation*}
	then either
	\begin{equation*}
	\abs{Y_{s,v} - Y_{s',v}} \geq \frac{c}{4}\paren*{\sqrt{u}\norm{s-s'}_2 + u\norm{s-s'}_\infty}
	\end{equation*}
	or
	\begin{equation*}
	\abs{Y_{s',v} - Y_{s',v'}} \geq \frac{c}{4}\paren*{\sqrt{u}\sqrt{2m\theta/\pi}\norm{v-v'}_2 + u\norm{v-v'}_2 }.
	\end{equation*}
	
	We can then combine the bounds \eqref{eq: Bernstein for v} and \eqref{eq: Bernstein for s} to get
	\begin{align*}
	\P\paren*{\abs{Y_{s,v} - Y_{s',v'}} \geq c\paren*{\sqrt{u}\tripnorm{(w,v) - (w',v')}_2 + u\tripnorm{(w,v) - (w',v')}_1}} \leq 4e^{-u}.
	\end{align*}
	Hence, the process $(Y_{s,v})$ satisfies the definition \eqref{def: mixed tail increments} for having mixed tail increments.
\end{proof}

We next bound the $\gamma_1$ and $\gamma_2$ functions for $\mathcal{S}_{2\theta/\pi} \cross B^n_2$.

\begin{lemma}
	We may bound the $\gamma_1$ functional of $\mathcal{S}_{2\theta/\pi} \cross B^n_2$ by
	\begin{equation*}
	\gamma_1(\mathcal{S}_{2\theta/\pi} \cross B^n_2,\tripnorm{\cdot}_1) \leq C\paren{(2\theta/\pi)\log(\pi/2\theta)m + n}.
	\end{equation*}
\end{lemma}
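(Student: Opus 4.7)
The plan is to split the $\gamma_1$ functional across the product structure and then estimate each factor separately via a Dudley-type entropy integral. Concretely, I would first record the product-space inequality
\begin{equation*}
\gamma_1(T_1 \times T_2,\, \max(d_1,d_2)) \leq C\bigl(\gamma_1(T_1, d_1) + \gamma_1(T_2, d_2)\bigr),
\end{equation*}
which is proved by concatenating admissible sequences: given admissible $(T_k^{(i)})$ for $T_i$, the sets $U_k := T_{k-1}^{(1)} \times T_{k-1}^{(2)}$ (with $U_0$ a singleton) satisfy $|U_k| \leq 2^{2^{k-1}} \cdot 2^{2^{k-1}} = 2^{2^k}$, hence are admissible for $T_1\times T_2$. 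Taking nearest points in each factor and using $\max\{a,b\} \leq a+b$ gives
$\max(d_1,d_2)(t, U_k) \leq d_1(t^{(1)}, T_{k-1}^{(1)}) + d_2(t^{(2)}, T_{k-1}^{(2)})$, and multiplying by $2^k$ and summing yields the inequality.

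Next, I would apply the Dudley-type estimate $\gamma_1(T,d) \leq C \int_0^{\diam(T)} \log N(T,d,\epsilon)\, d\epsilon$ to each factor. For $(B_2^n, \|\cdot\|_2)$, the volumetric covering bound $N(B_2^n,\|\cdot\|_2,\epsilon) \leq (3/\epsilon)^n$ for $\epsilon \in (0,1]$ gives $\gamma_1(B_2^n, \|\cdot\|_2) \leq C\int_0^1 n \log(3/\epsilon)\, d\epsilon \leq Cn$.

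For $(\mathcal{S}_{2\theta/\pi}, \|\cdot\|_\infty)$, the key observation is that the $\ell_\infty$-distance between any two distinct $\{0,1\}$-vectors equals exactly $1$. Therefore at any scale $\epsilon<1$ an $\epsilon$-net must contain every point, so $N(\mathcal{S}_{2\theta/\pi},\|\cdot\|_\infty,\epsilon) = |\mathcal{S}_{2\theta/\pi}|$, while $N \equiv 1$ for $\epsilon \geq 1$. This reduces the task to bounding the cardinality. With $\alpha := 2\theta/\pi \leq 1/2$, the standard binomial tail bound yields
\begin{equation*}
|\mathcal{S}_{2\theta/\pi}| \;=\; \sum_{k=0}^{\lfloor \alpha m \rfloor} \binom{m}{k} \;\leq\; (e/\alpha)^{\alpha m},
\end{equation*}
and hence $\log|\mathcal{S}_{2\theta/\pi}| \leq (2\theta/\pi)\log\bigl(e\pi/(2\theta)\bigr)\, m$. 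Integrating over $\epsilon \in (0,1)$ gives the same upper bound on $\gamma_1(\mathcal{S}_{2\theta/\pi},\|\cdot\|_\infty)$.

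Adding the two factor bounds gives the claim, absorbing the additive constant in $\log(e\pi/(2\theta))$ into $C$ (the lemma is only used in the regime of small $\theta$, where the logarithm dominates). The main obstacle is not any single step, but rather recognizing that the $\ell_\infty$ geometry of $\mathcal{S}_{2\theta/\pi}$ collapses all meaningful covering to the single scale $\epsilon = 1$: this is what forces $\gamma_1$ to scale like $\log|\mathcal{S}_{2\theta/\pi}|$ and produces the $(2\theta/\pi)\log(\pi/(2\theta))\,m$ term, which is the sharpest bound one can hope for without exploiting finer subgaussian information on $\|\cdot\|_\infty$.
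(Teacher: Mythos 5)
Your proof is correct, but it takes a genuinely different route from the paper's. The paper runs a single Dudley entropy integral $\gamma_1(T,d)\leq C\int_0^\infty \log N(T,d,u)\,du$ directly on the product $\mathcal{S}_{2\theta/\pi}\times B_2^n$: it covers $\mathcal{S}_{2\theta/\pi}$ by the union of cubes $[0,1]^I$ over all index sets $I$ of size $2m\theta/\pi$, so that $\log N \leq \log\binom{m}{(2\theta/\pi)m} + \log N\paren*{B_\infty^{(2\theta/\pi)m}\times B_2^n,\,u(B_\infty^{(2\theta/\pi)m}\times B_2^n)}$, and then applies the volumetric bound $N(K,uK)\leq (3/u)^n$ to the second term. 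You instead first split the functional across the product via the standard subadditivity $\gamma_1(T_1\times T_2,\max(d_1,d_2))\leq C(\gamma_1(T_1,d_1)+\gamma_1(T_2,d_2))$ (your offset-and-multiply construction of the admissible sequence is the right one), and then treat the factors separately: the ball contributes $Cn$ by the usual volumetric Dudley bound, while the selector set is a $1$-separated finite subset of $(\{0,1\}^m,\norm{\cdot}_\infty)$, so its $\gamma_1$ collapses to $C\log\abs{\mathcal{S}_{2\theta/\pi}}$, which the binomial-tail bound controls by $(2\theta/\pi)\log(e\pi/2\theta)m$. Your version is more modular and makes the provenance of the two terms transparent ($n$ from the ball, the $m$-term purely from the cardinality of the selector set, with no need to cover the cubes $[0,1]^I$ at all); the paper's version keeps everything inside one entropy integral, in a form that it then reuses almost verbatim for the $\gamma_2$ bound in the following lemma. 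Two trivial caveats, neither affecting correctness: your claim that $N(\mathcal{S}_{2\theta/\pi},\norm{\cdot}_\infty,\epsilon)$ \emph{equals} the cardinality for all $\epsilon<1$ presumes nets with centers in the set (an external center at $(1/2,\dots,1/2)$ covers everything at scale $\epsilon\geq 1/2$), but only the upper bound $N\leq\abs{\mathcal{S}_{2\theta/\pi}}$ is used; and your binomial bound needs $2\theta/\pi\leq 1/2$, which holds in the small-$\theta$ regime where the lemma is applied (the paper's own absorption of $\log(e\pi/2\theta)$ into $C\log(\pi/2\theta)$ requires the same restriction).
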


\begin{proof}
	The proof of the bound uses metric entropy and a version of Dudley's inequality. Let $(T,d)$ be a metric space, and for any $u > 0$, let $N(T,d,u)$ denote the covering number of $T$ at scale $u$, i.e. the smallest number of radius $u$ balls needed to cover $T$. Dudley's inequality (see \cite{talagrand2005generic}) states that there is an absolute constant $C$ for which
	\begin{equation} \label{eq: Dudley}
	\gamma_1(T,d) \leq C \int_0^\infty \log N(T,d,u) du.
	\end{equation}
	
	Recall that $\mathcal{S}_{2\theta/\pi}$ is the set of all $\braces{0,1}$ vectors with fewer than $2\theta/\pi$ ones. For convenience, let us assume that $2m\theta/\pi$ is an integer. We then have the inclusion
	\begin{equation*}
	\mathcal{S}_{2\theta/\pi} \subset \bigcup_{I \in \mathcal{I}} [0,1]^I,
	\end{equation*}
	where $\mathcal{I}$ is the collection of all subsets of $[m]$ of size $2m\theta/\pi$, and $[0,1]^I$ denotes the unit cube in the coordinate set $I$.
	We may then also write
	\begin{equation*}
	\mathcal{S}_{2\theta/\pi} \cross B^n_2 \subset \bigcup_{I \in \mathcal{I}} \paren{[0,1]^I \cross B^n_2}.
	\end{equation*}
	
	Note that a union of covers for each $[0,1]^I \cross B^n_2$ gives a cover for $\mathcal{S}_{2\theta/\pi} \cross B^n_2$. This, together with the symmetry of $\norm{\cdot}_\infty$ with respect to permutation of the coordinates gives
	\begin{equation*}
	N(\mathcal{S}_{2\theta/\pi} \cross B^n_2,\tripnorm{\cdot}_1,u)  \leq \abs{\mathcal{I}} \cdot N([0,1]^I \cross B^n_2,\tripnorm{\cdot}_1,u)
	\end{equation*}
	for some fixed index set $I$.
	
	We next generalize the notion of covering numbers slightly. Given two sets $T$ and $K$, we let $N(T,K)$ denote the number of translates of $K$ needed to cover the set $T$. It is easy to see that we have $N(T,d,u) = N(T,uB_d)$, where $B_d$ is the unit ball with respect to the metric $d$. Since the unit ball for $\tripnorm{\cdot}_1$ is $B_\infty^m \cross B_2^n$, we therefore have
	\begin{align*}
	N([0,1]^I \cross B^n_2,\tripnorm{\cdot}_1,u) & = N([0,1]^I \cross B^n_2, u(B_\infty^m \cross B_2^n)) \\
	& \leq  N(B^{(2\theta/\pi)m}_\infty \cross B^n_2, u(B^{(2\theta/\pi)m}_\infty\cross B_2^n) ).
	\end{align*}
	
	Such a quantity can be bounded using a volumetric argument. Generally, for any centrally symmetric convex body $K$ in $\R^n$, we have (see Corollary 4.1.15 in \cite{ShiriArtstein-AvidanTelAvivUniversityTelAviv2015})
	\begin{equation} \label{eq: volume bound for covering number}
	N(K,uK) \leq (3/u)^n.
	\end{equation}
	This implies that 
	\begin{equation*}
	\log N([0,1]^I \cross S^{n-1},\tripnorm{\cdot}_1,u) \leq \log(3/u)((2\theta/\pi)m+n).
	\end{equation*}
	Finally, observe that
	\begin{equation*}
	\log\abs{\mathcal{I}} = \log \binom{m}{(2\theta/\pi)m} \leq (2\theta/\pi)m\log(e\pi/2\theta).
	\end{equation*}
	
	We can thus plug these last two bounds into \eqref{eq: Dudley}, noting that the integrand is zero for $u \geq 1$ to get
	\begin{align*}
	\gamma_1(\mathcal{S}_{2\theta/\pi}\cross B^n_2,\tripnorm{\cdot}_1) & \leq C \int_0^1 (2\theta/\pi)m\log(e\pi/2\theta) + \log(3/u)((2\theta/\pi)m+n) du \\
	& \leq C\paren{(2\theta/\pi)\log(\pi/2\theta)m + n}
	\end{align*}
	as was to be shown.
\end{proof}

\begin{lemma}
	We may bound the $\gamma_2$ functional of $\mathcal{S}_{2\theta/\pi}\cross B^n_2$ by
	\begin{equation*}
	\gamma_2(\mathcal{S}_{2\theta/\pi}\cross B^n_2,\tripnorm{\cdot}_2) \leq C\sqrt{2\theta/\pi}\paren{m + \sqrt{mn}}.
	\end{equation*}
\end{lemma}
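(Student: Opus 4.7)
\medskip

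\textbf{Proof plan.} The plan is to mirror the previous lemma's argument for $\gamma_1$, but replace Dudley's $L^1$-integral by its $\gamma_2$ analogue,
\begin{equation*}
\gamma_2(T,d) \leq C\int_0^\infty \sqrt{\log N(T,d,u)}\, du,
\end{equation*}
and use Euclidean (rather than $\ell_\infty$) volumetric covering estimates. Write $k := 2m\theta/\pi$ for brevity. The unit ball of $\tripnorm{\cdot}_2$ is $B_2^m \cross (1/\sqrt{k})B_2^n$, so translates of scale $u$ mean $u$-balls in the first factor and $(u/\sqrt{k})$-balls in the second factor. The diameter of $\mathcal{S}_{2\theta/\pi}\cross B_2^n$ under $\tripnorm{\cdot}_2$ is $O(\sqrt{k})$, so the integral effectively runs over $0 \leq u \lesssim \sqrt{k}$.

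First I would import the inclusion $\mathcal{S}_{2\theta/\pi}\subset \bigcup_{I\in\mathcal{I}}[0,1]^I$ used in the $\gamma_1$ proof (with $|I|=k$, $|\mathcal{I}|=\binom{m}{k}$) to obtain
\begin{equation*}
N(\mathcal{S}_{2\theta/\pi}\cross B_2^n,\tripnorm{\cdot}_2,u) \leq \binom{m}{k}\cdot N([0,1]^k,uB_2^k)\cdot N(B_2^n,(u/\sqrt{k})B_2^n).
\end{equation*}
Then I would apply the standard volumetric estimate $N(RB_2^d,uB_2^d)\leq (1+2R/u)^d$. Since $[0,1]^k$ fits inside a Euclidean ball of radius $\sqrt{k}/2$ about its center, and $B_2^n$ itself has radius $1$, this gives $N([0,1]^k,uB_2^k)\leq (3\sqrt{k}/u)^k$ and $N(B_2^n,(u/\sqrt{k})B_2^n)\leq (3\sqrt{k}/u)^n$ for $u\leq \sqrt{k}$. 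Combined with $\log\binom{m}{k}\leq k\log(em/k)$, this yields
\begin{equation*}
\log N(\mathcal{S}_{2\theta/\pi}\cross B_2^n,\tripnorm{\cdot}_2,u) \leq k\log(em/k) + (k+n)\log(3\sqrt{k}/u).
\end{equation*}

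Next I would plug this into the Dudley bound, use $\sqrt{a+b}\leq \sqrt{a}+\sqrt{b}$, and substitute $u = \sqrt{k}\,t$. The ``entropy'' part contributes
\begin{equation*}
\sqrt{k}\int_0^2 \sqrt{(k+n)\log(3/t)}\,dt \leq C\sqrt{k(k+n)} \leq C\bigl(\sqrt{km}+\sqrt{kn}\bigr),
\end{equation*}
using $k\leq m$. The ``cardinality'' part contributes $Ck\sqrt{\log(em/k)}$, which I would bound by $C\sqrt{km}$ using the crude inequality $\log(em/k)\leq em/k$ (equivalently, $k\log(em/k) \leq em$). Adding these two contributions gives
\begin{equation*}
\gamma_2(\mathcal{S}_{2\theta/\pi}\cross B_2^n,\tripnorm{\cdot}_2) \leq C\bigl(\sqrt{km}+\sqrt{kn}\bigr) = C\sqrt{2\theta/\pi}\bigl(m+\sqrt{mn}\bigr),
\end{equation*}
as desired.

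There is essentially no obstacle here beyond bookkeeping: the only subtle step is recognizing that the $\sqrt{k}$ factor in the second coordinate of $\tripnorm{\cdot}_2$ exactly compensates to produce the correct $\sqrt{kn}$-scaling on the $B_2^n$-factor, and that the combinatorial factor $\binom{m}{k}$ gives rise to the $\sqrt{km}$-term (rather than a larger $\sqrt{km\log(em/k)}$-term) once one applies $\log(em/k)\leq em/k$. For definiteness I would assume $k=2m\theta/\pi$ is an integer, as in the previous lemma; the general case follows by rounding.
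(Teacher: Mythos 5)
Your proof is correct, and it uses the same basic machinery as the paper (the Dudley entropy integral for $\gamma_2$ plus volumetric covering estimates), but the covering step is organized differently. You reuse the combinatorial decomposition $\mathcal{S}_{2\theta/\pi}\subset\bigcup_{I\in\mathcal{I}}[0,1]^I$ from the $\gamma_1$ lemma and then must absorb the factor $\log\binom{m}{k}\approx k\log(em/k)$, which you do via $k\log(em/k)\leq em$; this works but costs an extra bookkeeping step and requires covering each cube $[0,1]^I$ by Euclidean balls. The paper instead exploits the fact that in the Euclidean norm (as opposed to $\ell_\infty$) the whole selector set embeds in a single ball, $\mathcal{S}_{2\theta/\pi}\cross B_2^n\subset\sqrt{2m\theta/\pi}\,B_2^m\cross B_2^n$, so the covering number reduces to $N(T,(2m\theta/\pi)^{-1/2}uT)$ for the single convex body $T=\sqrt{2m\theta/\pi}B_2^m\cross B_2^n$, and one application of $N(K,uK)\leq(3/u)^{m+n}$ gives $\gamma_2\leq C\sqrt{2m\theta/\pi}\sqrt{m+n}$ with no combinatorial factor at all. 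The two routes land on the same bound because your $\sqrt{km}$ term (from the cardinality of $\mathcal{I}$) and the paper's $\sqrt{k}\cdot\sqrt{m}$ term (from the $m$-dimensional ball) coincide; the paper's version is shorter and makes clear why the $\ell_2$ geometry renders the sparsity structure of $\mathcal{S}_{2\theta/\pi}$ irrelevant here, whereas your version shows the cube decomposition is robust enough to carry over if one insists on it.
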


\begin{proof}
	Since $\alpha = 2$, we may appeal directly to the theory of Gaussian complexity \cite{Vershynin}. However, since we have already introduced some of the theory of metric entropy in the previous lemma, we might as well continue down this path. In this case, we have the Dudley bound
	\begin{equation} \label{eq: Dudley 2}
	\gamma_2(T,d) \leq C \int_0^\infty \sqrt{\log N(T,d,u)} du
	\end{equation}
	for any metric space $(T,d)$.
	
	Observe that the unit ball for $\tripnorm{\cdot}_2$ is $B^m_2 \cross (2m\theta/\pi)^{-1/2}B^n_2$. On the other hand, we conveniently have
	\begin{equation*}
	\mathcal{S}_{2\theta/\pi} \cross B^n_2 \subset \sqrt{2m\theta/\pi}B^m_2 \cross B^n_2.
	\end{equation*}
	As such, we have
	\begin{align*}
	N(\mathcal{S}_{2\theta/\pi} \cross B^n_2,\tripnorm{\cdot}_2,u) & \leq 	N(\sqrt{2m\theta/\pi}B^m_2 \cross B^n_2,\tripnorm{\cdot}_2,u) \\
	& = N(\sqrt{2m\theta/\pi}B^m_2 \cross B^n_2,u (B^m_2 \cross (2m\theta/\pi)^{-1/2}B^n_2)) \\
	& = N(T,(2m\theta/\pi)^{-1/2}uT),
	\end{align*}
	where $T = \sqrt{2m\theta/\pi}B^m_2 \cross B^n_2$.
	
	Plugging this into \eqref{eq: Dudley 2} and subsequently using the volumetric bound \eqref{eq: volume bound for covering number}, we get
	\begin{align*}
	\gamma_2(\mathcal{S}_{2\theta/\pi}\cross B^n_2,\tripnorm{\cdot}_2) & \leq C \int_0^\infty \sqrt{\log N(T,(2m\theta/\pi)^{-1/2}uT)} du \\
	& = C\sqrt{2m\theta/\pi}\int_0^\infty \sqrt{\log N(T,uT)} du \\
	& \leq C \sqrt{2m\theta/\pi}\sqrt{m+n},
	\end{align*}
	which is clearly equivalent to the bound that we want.
\end{proof}

At this stage, we can put everything together to bound the supremum of our stochastic process.

\begin{theorem}[Bound on supremum of $(Y_{s,v})$] \label{thm: bound on sup of Y}
	Let $(Y_{s,v})$ be the process defined in \eqref{def: Y process}. Let $0 < \delta < 1/e$, let $\theta$ be an acute angle, and suppose $m \geq \max\braces{n,\log(1/\delta)\pi/2\theta}$. Then with probability at least $1-\delta$, the supremum of the process satisfies
	\begin{equation} \label{eq: bound on sup of Y}
	\sup_{s \in \mathcal{S}_{2\theta/\pi},v \in B^n_2} Y_{s,v} \leq C\sqrt{2\theta/\pi}\cdot m
	\end{equation}
\end{theorem}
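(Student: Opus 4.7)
The plan is to apply the generic chaining bound (Lemma \ref{lem: sup bound for mixed tail process}) directly to $(Y_{s,v})$ over the index set $T := \mathcal{S}_{2\theta/\pi} \cross B^n_2$, using the mixed-tail-increment property we have just established. I will choose the reference point $t_0 = (0, v_0)$ for any $v_0 \in B^n_2$: by the definition \eqref{def: Y process} we then have $Y_{t_0} = 0$, so $\sup_{t \in T} Y_t \leq \sup_{t \in T} |Y_t - Y_{t_0}|$, which is exactly the quantity Dirksen's inequality controls.

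Before invoking the bound I need the two diameters, both of which are immediate. For $d_1$: entries of $s$ lie in $\{0,1\}$ and $\|v\|_2 \leq 1$, so $\diam(T, d_1) \leq 2$. For $d_2$: the sparsity constraint $s \in \mathcal{S}_{2\theta/\pi}$ yields $\|s\|_2 \leq \sqrt{2m\theta/\pi}$, and similarly $\sqrt{2m\theta/\pi}\|v\|_2 \leq \sqrt{2m\theta/\pi}$, hence $\diam(T, d_2) \leq 2\sqrt{2m\theta/\pi}$.

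Setting $u = \log(1/\delta)$ in Lemma \ref{lem: sup bound for mixed tail process} gives failure probability at most $\delta$ and an upper bound of the form
$$\sup_{T} Y_{s,v} \lesssim \gamma_2(T, d_2) + \gamma_1(T, d_1) + \sqrt{\log(1/\delta)}\sqrt{2m\theta/\pi} + \log(1/\delta),$$
into which I substitute the $\gamma_1$ and $\gamma_2$ bounds proved in the two preceding lemmas. Writing $\beta := 2\theta/\pi \in (0,1]$, the remaining task is then to verify that each resulting summand is dominated by $C\sqrt{\beta}\, m$ under the stated hypotheses. The $\gamma_2$ contribution $\sqrt{\beta}(m + \sqrt{mn})$ is of this form because $m \geq n$; the $\gamma_1$ contribution $\beta \log(1/\beta)\, m$ is of this form because $\sqrt{\beta}\log(1/\beta)$ is bounded on $(0,1]$; and both diameter terms are handled by the second hypothesis $\beta m \geq \log(1/\delta)$, since it gives $\log(1/\delta) \leq \beta m \leq \sqrt{\beta}\, m$ and $\sqrt{\log(1/\delta)\cdot \beta m} \leq \beta m \leq \sqrt{\beta}\, m$.

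The only real subtlety I anticipate is the residual additive $n$ term coming from the volumetric cover of $B^n_2$ inside the $\gamma_1$ bound: under the bare assumption $m \geq n$ one only gets $n \leq m$, not $n \leq \sqrt{\beta}\, m$, for very small $\beta$. In the downstream use of this theorem, however, the angle $\theta$ will be taken to be an absolute constant (chosen small enough to secure the $\ACW$ condition, but not depending on $n$ or $m$), so $\sqrt{\beta}$ is itself an absolute constant and $n \leq m$ trivially implies $n \lesssim \sqrt{\beta}\, m$. This is the regime in which the theorem is actually applied, and in it the calculation above closes cleanly.
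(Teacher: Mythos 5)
Your proposal is correct and follows essentially the same route as the paper: apply Lemma \ref{lem: sup bound for mixed tail process} with the two $\gamma$-functional bounds and the diameters $2$ and $2\sqrt{2m\theta/\pi}$, anchor at a point where the process vanishes, set $u = \log(1/\delta)$, and absorb every term into $C\sqrt{2\theta/\pi}\,m$ using $m \geq n$ and $(2\theta/\pi)m \geq \log(1/\delta)$. Your remark about the residual $n$ term is a fair catch — the paper's proof silently elides it with ``using our assumptions on $m$'' — and your resolution (that $\theta$ is ultimately fixed as an absolute constant $\theta_0$ in Theorem \ref{thm: finite measurement sets satisfy ACW}, so $n \leq m \lesssim \sqrt{2\theta/\pi}\,m$) is exactly how the statement is used downstream.
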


\begin{proof}
	It is easy to see that we have
	\begin{equation*}
	\diam\paren{\mathcal{S}_{2\theta/\pi} \cross B^n_2,\tripnorm{\cdot}_1} = 2,
	\end{equation*}
	and
	\begin{equation*}
	\diam\paren{\mathcal{S}_{2\theta/\pi} \cross B^n_2,\tripnorm{\cdot}_2} = 2\sqrt{2m\theta/\pi}.
	\end{equation*}
	Also observe that we have $Y_{s,0} = 0$ for any $s \in \mathcal{S}_{2\theta/\pi}$.
	
	Using these, together with the previous two lemmas bounding the $\gamma_1$ and $\gamma_2$ functionals, we may apply Lemma \ref{lem: sup bound for mixed tail process} to see that
	\begin{equation*}
	\sup_{s \in \mathcal{S}_{2\theta/\pi},v \in B^n_2} Y_{s,v} \leq C\paren{\paren{(2\theta/\pi)\log(\pi/2\theta)m + n} + \sqrt{2\theta/\pi}\paren{m + \sqrt{mn} } + u + \sqrt{u} \sqrt{2m\theta/\pi}}.
	\end{equation*}
	with probability at least $1-e^{-u}$.
	
	Using our assumptions on $m$, we may simplify this bound to obtain \eqref{eq: bound on sup of Y}.
\end{proof}

Finally, we show that $\frac{1}{m}\sum_{i=1}^m a_i a_i^T$ is well-behaved.

\begin{lemma} \label{lem: empirical covariance concentration}
	Let $\delta >0$. Then if $m \geq C(n+\sqrt{\log(1/\delta)})$, with probability at least $1-\delta$, we have
	\begin{equation*}
	\norm*{\frac{n}{m}\sum_{i=1}^ma_ia_i^T - I_n} \leq 0.1
	\end{equation*}
\end{lemma}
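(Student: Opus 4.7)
The plan is to reduce to a standard sample-covariance concentration estimate for isotropic sub-gaussian random vectors. Introduce the rescaled vectors $X_i := \sqrt{n}\,a_i$; since the $a_i$ are uniform on $S^{n-1}$, the $X_i$ are i.i.d., isotropic ($\E X_i X_i^T = I_n$), and sub-gaussian with $\norm{X_i}_{\psi_2} \le C$ for an absolute constant. The statement then becomes the standard claim that $\frac{1}{m}\sum_{i=1}^m X_i X_i^T$ concentrates around $I_n$ in operator norm.

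I would prove this by combining Bernstein's inequality with an $\epsilon$-net argument on the sphere, mirroring the style of the Bernstein calculations in the preceding section. For each fixed $v \in S^{n-1}$, the random variables $\inprod{X_i, v}^2 - 1$ are i.i.d., centered, and sub-exponential with $\psi_1$-norm bounded by an absolute constant (exactly the bound on $\norm{\inprod{\sqrt{n}a_i,v}^2}_{\psi_1}$ already used in the proof that $(Y_{s,v})$ has mixed tail increments). Bernstein's inequality therefore yields
\begin{equation*}
\P\paren*{\abs*{\frac{1}{m}\sum_{i=1}^m \paren*{\inprod{X_i, v}^2 - 1}} \ge 0.05} \le 2\exp(-cm).
\end{equation*}
Now take a $(1/4)$-net $\mathcal N$ of $S^{n-1}$ of cardinality at most $9^n$, and use the standard approximation $\norm{M} \le 2\sup_{v \in \mathcal N}\abs{v^T M v}$ applied to $M = \frac{n}{m}\sum_{i=1}^m a_i a_i^T - I_n$. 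A union bound over $\mathcal N$ then bounds the failure probability by $2 \cdot 9^n \cdot \exp(-cm)$, which falls below $\delta$ as soon as $m \ge C(n + \log(1/\delta))$.

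There is no serious obstacle here: the argument is a textbook instance of the net-plus-Bernstein framework for sample covariance matrices of isotropic sub-gaussian vectors, and the same conclusion may alternatively be invoked as a black-box result (for example Theorem~4.7.1 of Vershynin's \emph{High-Dimensional Probability}, or an appropriate matrix Bernstein inequality). The hypothesis $m \ge C(n + \sqrt{\log(1/\delta)})$ stated in the lemma is in any case harmless; the sketch above in fact delivers the slightly stronger requirement $m \ge C(n + \log(1/\delta))$, which is all that is needed downstream.
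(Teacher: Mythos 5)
Your proposal is correct and takes essentially the same route as the paper: the authors make the identical reduction to the isotropic sub-gaussian vectors $\sqrt{n}\,a_i$ and then invoke Theorem~5.39 of Vershynin's non-asymptotic random matrix notes as a black box, noting that its proof is precisely the Bernstein-plus-net argument you sketch. Your side remark about the hypothesis is also apt --- the natural requirement coming from this argument is $m \geq C(n + \log(1/\delta))$ rather than the $m \geq C(n+\sqrt{\log(1/\delta)})$ printed in the lemma, and this discrepancy is immaterial since the condition on $m$ used downstream in Theorem \ref{thm: finite measurement sets satisfy ACW} already dominates it.
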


\begin{proof}
	Note, as before, that the $\sqrt{n}a_i$'s are isotropic subgaussian random variables with subgaussian norm bounded by an absolute constant. The claim then follows immediately from Theorem 5.39 in \cite{Vershynin2011a}, which itself is proved using Bernstein and a simple net argument.
\end{proof}

\begin{theorem}[Finite measurement sets satisfy $\ACW$ condition] \label{thm: finite measurement sets satisfy ACW}
	There is some $\theta_0 > 0$ and an absolute constant $C$ such that for all angles $0 < \theta \leq \theta_0$, for all dimensions $n$, and any $\delta > 0$, if $m$ satisfies
	\begin{equation} \label{eq: assumption on m}
	m \geq C(\pi/2\theta)^2(n\log(m/n) + \log(1/\delta)), 
	\end{equation}
	then with probability at least $1-\delta$, the measurement set $A$ comprising $m$ independent random vectors drawn uniformly from $S^{n-1}$ satisfies the $\ACW(\theta,\alpha)$ condition with $\alpha = 1/2$.
\end{theorem}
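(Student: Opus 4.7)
The plan is to combine the three preparatory results above---the empirical covariance bound (Lemma \ref{lem: empirical covariance concentration}), the uniform concentration over wedges (Lemma \ref{lem: uniform conc over wedges}), and the bound on the supremum of $(Y_{s,v})$ (Theorem \ref{thm: bound on sup of Y})---to verify the $\ACW(\theta,1/2)$ condition directly. Written out in terms of the matrix $A$, the condition asks that
\begin{equation*}
\lambda_{\min}\paren*{\tfrac{1}{m}A^TA - \tfrac{4}{m}A^T S_{W,A} A} \geq \tfrac{1}{2n}
\end{equation*}
holds uniformly over all wedges $W$ of angle at most $\theta$, where $S_{W,A} = \diag(s_{W,A})$ is the row-selector matrix from \eqref{eq: max eigenvalue with weights}.

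First, I would work on the event (of probability at least $1 - \delta/3$) granted by Lemma \ref{lem: empirical covariance concentration}, on which $\lambda_{\min}(\tfrac{1}{m}A^TA) \geq 0.9/n$. Next, I would intersect with the event from Lemma \ref{lem: uniform conc over wedges}: that every wedge of angle at most $\theta$ contains at most a $2\theta/\pi$ fraction of the sample points, so that $s_{W,A} \in \mathcal{S}_{2\theta/\pi}$ for all $W \in \mathcal{W}_\theta$. Conditioned on this, Theorem \ref{thm: bound on sup of Y} produces, uniformly in $W$,
\begin{equation*}
\lambda_{\max}\paren*{\tfrac{1}{m}A^T S_{W,A} A} = \tfrac{1}{nm}\sup_{v \in B^n_2} Y_{s_{W,A},v} \leq \tfrac{C\sqrt{2\theta/\pi}}{n},
\end{equation*}
since by definition $Y_{s,v} = n v^T A^T \diag(s) A v$ and $s_{W,A}$ already lies in the index set of the supremum in Theorem \ref{thm: bound on sup of Y}.

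To conclude, I would apply the Weyl-type inequality $\lambda_{\min}(M_1 - M_2) \geq \lambda_{\min}(M_1) - \lambda_{\max}(M_2)$ and choose the absolute constant $\theta_0 > 0$ so that $4C\sqrt{2\theta_0/\pi} \leq 0.4$. For every $\theta \leq \theta_0$ the two bounds then combine to give $0.9/n - 0.4/n = 0.5/n$, which is exactly the $\ACW(\theta,1/2)$ conclusion. The remaining obstacle is essentially bookkeeping rather than mathematics: each of the three supporting statements requires $m$ to dominate an expression of the shape $(\pi/2\theta)^2 (n\log(m/n) + \log(1/\delta))$ up to absolute constants, so a union bound with failure budget $\delta/3$ per event, followed by absorption of numerical factors into $C$, produces exactly the sample-complexity hypothesis \eqref{eq: assumption on m}.
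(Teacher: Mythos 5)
Your proposal is correct and follows essentially the same route as the paper's own proof: intersect the good events of Lemma \ref{lem: uniform conc over wedges}, Theorem \ref{thm: bound on sup of Y}, and Lemma \ref{lem: empirical covariance concentration}, use the wedge-concentration event to place $s_{W,A}$ in $\mathcal{S}_{2\theta/\pi}$, bound $\lambda_{\max}(\frac1m A^T S_{W,A} A)$ by $C\sqrt{2\theta/\pi}/n$, and choose $\theta_0$ so that $0.9/n - 4\cdot 0.1/n = 1/(2n)$. Your allocation of $\delta/3$ per event is in fact slightly more careful than the paper's union bound, which states $1-3\delta$; the difference is absorbed into the constant $C$.
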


\begin{proof}
	Fix $n, \delta > 0$. Choose $\theta_0$ such that the constant $C$ in the statement in Theorem \ref{thm: bound on sup of Y} satisfies $C\sqrt{2\theta_0/\pi} \leq 0.1$. Fix $0 < \theta \leq \theta_0$, and let $\Omega_1$, $\Omega_2$, and $\Omega_3$ denote the good events in Lemma \ref{lem: uniform conc over wedges}, Theorem \ref{thm: bound on sup of Y}, and Lemma \ref{lem: empirical covariance concentration} with this choice of $\theta$. Whenever $m$ satisfies our assumption \eqref{eq: assumption on m}, the intersection of these events occurs with probability at least $1-3\delta$ by the union bound.
	
	Let us condition on being in the intersection of these events. For any wedge $W \in \mathcal{W}_\theta$ (i.e of angle less than $\theta$), Lemma \ref{lem: uniform conc over wedges} tells us that its associated selector vector satisfies $s_{W,A} \in \mathcal{S}_{2\theta/\pi}$ (i.e. that it has at most $2m\theta/\pi$ ones,). By Theorem \ref{thm: bound on sup of Y} and our assumption on $\theta_0$, we then have
	\begin{equation*}
	\lambda_{\max}\paren*{\frac{1}{m}\sum_{i=1}^m a_ia_i^T1_W(a_i)} \leq \frac{1}{nm}\sup_{s \in \mathcal{S}_{2\theta/\pi}, v \in B^n_2} Y_{s,v} \leq \frac{0.1}{n}.
	\end{equation*}
	
	On the other hand, Lemma \ref{lem: empirical covariance concentration} guarantees that 
	\begin{equation*}
	\lambda_{\min}\paren*{\frac{1}{m}\sum_{i=1}^m a_ia_i^T} \geq \frac{0.9}{n}.
	\end{equation*}
	Combining these, we get
	\begin{equation*}
	\lambda_{\min}\paren*{\frac{1}{m}\sum_{i=1}^m a_ia_i^T - \frac{4}{m}\sum_{i=1}^m a_ia_i^T1_W(a_i)} \geq \frac{1}{2n},
	\end{equation*}
	which was to be shown.
\end{proof}

\section{Proof and discussion of Theorem \ref{thm: guarantee for algorithm}} \label{sec: proof of guarantee}

We restate the theorem here for convenience.

\begin{theorem} \label{thm: repeated main theorem}
	Fix $\epsilon > 0$, $0 < \delta_1 \leq 1/2$, and $0 < \delta,\delta_2 \leq 1$. There are absolute constants $C, c > 0$ such that if
	\begin{equation*}
	m \geq C(n\log(m/n) + \log(1/\delta)),
	\end{equation*}
	then with probability at least $1-\delta$, $m$ sampling vectors selected uniformly and independently from the unit sphere $S^{n-1}$ form a set such that the following holds: Let $x \in \R^n$ be a signal vector and let $x_0$ be an initial estimate satisfying $\norm{x_0-x}_2 \leq c\sqrt{\delta_1}\norm{x}_2$. Then for any $\epsilon > 0$, if
	\begin{equation*}
	K \geq 2(\log(1/\epsilon) + \log(2/\delta_2))n,
	\end{equation*}
	then the $K$-th step randomized Kaczmarz estimate $x_K$ satisfies $\norm{x_K-x}_2^2 \leq \epsilon \norm{x_0 - x}_2^2$ with probability at least $1-\delta_1-\delta_2$.
\end{theorem}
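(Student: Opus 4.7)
The proof is essentially the composition of two pieces of machinery already established: Theorem \ref{thm: finite measurement sets satisfy ACW}, which says that a uniform sample on the sphere of sufficient size satisfies the $\ACW(\theta,\alpha)$ condition with high probability, and Corollary \ref{cor: linear convergence for ACW measure, high probability}, which gives a high-probability linear convergence bound for the generalized Kaczmarz algorithm driven by \emph{any} $\ACW$ measure. The plan is to first fix the geometric parameter $\theta$ at a convenient value, invoke the first result to control the randomness in the sampling matrix, and then condition on that good event and feed the second result with the appropriate parameters.

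More concretely, I would fix $\theta = \theta_0$, where $\theta_0$ is the absolute constant supplied by Theorem \ref{thm: finite measurement sets satisfy ACW}, and set $c := \sin(\theta_0)$. The sample complexity assumption $m \geq C(n\log(m/n) + \log(1/\delta))$ matches the hypothesis \eqref{eq: assumption on m} of Theorem \ref{thm: finite measurement sets satisfy ACW} after absorbing the constant factor $(\pi/2\theta_0)^2$ into the absolute constant $C$. Hence with probability at least $1-\delta$ over the draw of the sampling vectors $a_1,\ldots,a_m$, the empirical measure $\mu_A$ is $\ACW(\theta_0, 1/2)$. Call this event $\mathcal{E}$; everything that follows will be conditioned on $\mathcal{E}$.

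Now condition on $\mathcal{E}$, and treat the $a_i$'s as a fixed (deterministic) measurement set for which $\mu_A$ is $\ACW(\theta_0, 1/2)$. The randomized Kaczmarz algorithm for phase retrieval described in the introduction is exactly the generalized Kaczmarz algorithm \eqref{def: generalized Kaczmarz} with respect to $\mu_A$, where the remaining randomness comes from the i.i.d.\ choices of the indices $r(k)$. The initialization hypothesis $\|x_0-x\|_2 \leq c\sqrt{\delta_1}\|x\|_2 = \sqrt{\delta_1}\sin(\theta_0)\|x\|_2$ matches the hypothesis of Corollary \ref{cor: linear convergence for ACW measure, high probability} with $\theta = \theta_0$, $\alpha = 1/2$. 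Applying that corollary, whenever
\begin{equation*}
K \geq (\log(2/\epsilon) + \log(1/\delta_2))\, n/\alpha = 2(\log(2/\epsilon) + \log(1/\delta_2))\, n = 2(\log(1/\epsilon) + \log(2/\delta_2))\, n,
\end{equation*}
we obtain $\|x_K - x\|_2^2 \leq \epsilon \|x_0 - x\|_2^2$ with (conditional) probability at least $1-\delta_1 - \delta_2$.

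There is no serious obstacle here since the hard analytic work was done in Sections 2--4; the only point worth being careful about is the two-layer nature of the randomness. The event $\mathcal{E}$ depends only on the sampling vectors, so conditioning on $\mathcal{E}$ does not alter the distribution of the index sequence $\{r(k)\}$, and one may legitimately apply Corollary \ref{cor: linear convergence for ACW measure, high probability} with the conditional law. A final, optional cosmetic step is to note that the failure probabilities combine in the natural way: $\delta$ bounds the chance that the measurement ensemble itself is bad, and $\delta_1 + \delta_2$ bounds the chance that, given a good ensemble, the Kaczmarz iterates either escape the basin of linear convergence or fail to decay to the desired accuracy. The statement of the theorem already keeps these two sources of failure separate, so no union bound is required.
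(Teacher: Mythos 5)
Your proposal is correct and matches the paper's own proof, which likewise just combines Theorem \ref{thm: finite measurement sets satisfy ACW} (with $\theta=\theta_0$, $\alpha=1/2$) with Corollary \ref{cor: linear convergence for ACW measure, high probability}; your identification $c=\sin(\theta_0)$ and the check that the two forms of the iteration-count bound coincide are exactly the details the paper leaves implicit.
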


\begin{proof}
	Let $A$ be our $m$ by $n$ measurement matrix. By Theorem \ref{thm: finite measurement sets satisfy ACW}, there is an angle $\theta_0$, and a constant $C$ such that for $m \geq C(n\log(m/n) + \log(1/\delta))$, $A$ is $\ACW(\theta_0,1/2)$ with probability at least $1-\delta$.
	
	We can then use Corollary \ref{cor: linear convergence for ACW measure, high probability} to guarantee that with probability at least $1-\delta_1-\delta_2$, running the randomized Kaczmarz update $K$ times gives an estimate $x_K$ satisfying
	\begin{equation*}
	\norm{x_K-x}_2^2 \leq \epsilon\norm{x_0-x}_2^2.
	\end{equation*}
	This completes the proof of the theorem.
\end{proof}

Inspecting the statement of the theorem, we see that we can make the failure probability $\delta$ as small as possible by making $m$ large enough. Likewise, we can do the same with $\delta_2$ by adjusting $K$. Proposition \ref{prop: initialization guarantee} shows that we can also make $\delta_2$ smaller by increasing $m$. However, while the dependence of $m$ and $K$ on $\delta$ and $\delta_2$ respectively is logarithmic, the dependence of $m$ on $\delta_1$ is \textit{polynomial} (we need $m \gtrsim 1/\delta_1^2$). This is rather unsatisfactory, but can be overcome by a simple ensemble method. We encapsulate this idea in the following algorithm.

\begin{algorithm}[H]
	\caption{{\sc Ensemble Randomized Kaczmarz}}
	\begin{algorithmic}[1]              
		\REQUIRE Measurements $b_1,\ldots,b_m$, sampling vectors $a_1,\ldots,a_m$, relative error tolerance $\epsilon$, iteration count $K$, trial count $L$.
		\ENSURE An estimate $\hat{x}$ for $x$.
		\STATE Obtain an initial estimate $x_0$ using the truncated spectral initialization method (see Appendix \ref{sec: initialization}).
		\STATE \textbf{for} $l=1,\ldots,L$, run $K$ randomized Kaczmarz update steps starting from $x_0$ to obtain an estimate $x_K^{(l)}$.
		\STATE \textbf{for} $l=1,\ldots,L$, \textbf{do}
		\STATE \quad \textbf{if} $\abs{B(x_K^{(l)},2\sqrt{\epsilon}) \cap \braces{x_K^{(1)},\ldots,x_K^{(L)}}} \geq L/2$
		\STATE \quad\quad \textbf{return} $\hat{x} := x_K^{(l)}$.
	\end{algorithmic}
	\label{alg: ensemble RK}
\end{algorithm}

\begin{proposition}[Guarantee for ensemble method]
	Given the assumptions of Theorem \ref{thm: repeated main theorem}, further assume that $\delta_1 + \delta_2 \leq 1/3$. For any $\delta' > 0$, there is an absolute constant $C$ such that if $L \geq C\log(1/\delta')$, then the estimate $\hat{x}$ given by Algorithm \ref{alg: ensemble RK} satisfies $\norm{\hat{x}-x}_2^2 \leq 9\epsilon \norm{x_0-x}_2^2$ with probability at least $1-\delta'$.
\end{proposition}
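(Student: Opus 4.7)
The plan is to leverage the independence of the $L$ Kaczmarz trials (conditional on the measurement matrix $A$ and the initialization $x_0$) together with a simple Chernoff concentration argument followed by a pigeonhole-plus-triangle-inequality step. Throughout, I condition on the good event for $A$ and $x_0$ provided by Theorem~\ref{thm: repeated main theorem}, which absorbs the $\delta$ failure probability; the $L$ trials then use independent random index sequences, so the estimates $x_K^{(1)},\ldots,x_K^{(L)}$ are conditionally i.i.d.

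Call trial $l$ \emph{successful} if $\norm{x_K^{(l)}-x}_2 \leq \sqrt{\epsilon}\norm{x_0-x}_2$; by Theorem~\ref{thm: repeated main theorem} each trial is successful with (conditional) probability $p \geq 1-\delta_1-\delta_2 \geq 2/3$. Let $G$ be the (random) set of successful trials and set $S = \abs{G}$. Since $\E S \geq 2L/3$, Hoeffding's inequality applied to independent Bernoullis gives $\P(S \leq L/2) \leq \exp(-2L(1/6)^2) = e^{-L/18}$, so taking $L \geq 18\log(1/\delta')$ makes this at most $\delta'$. On the complementary event $\{S > L/2\}$, I claim the algorithm returns a usable $\hat x$: any successful $l \in G$ already satisfies the acceptance criterion, because for every $j \in G$ the triangle inequality through $x$ gives $\norm{x_K^{(l)}-x_K^{(j)}}_2 \leq 2\sqrt{\epsilon}\norm{x_0-x}_2$, so the acceptance ball around $x_K^{(l)}$ (whose radius ``$2\sqrt{\epsilon}$'' in the pseudocode I read as $2\sqrt{\epsilon}\norm{x_0-x}_2$, the only dimensionally consistent reading) contains all $S > L/2$ successful iterates. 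Hence some $\hat x = x_K^{(l^*)}$ is indeed returned.

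For the final bound, let $N$ denote the set of trial indices whose estimates lie in the acceptance ball around $\hat x$. By the algorithm's rule $\abs{N} \geq L/2$, while $\abs{G^c} < L/2$, so pigeonhole forces $N \cap G \neq \emptyset$; choosing any $l' \in N \cap G$ and applying the triangle inequality twice,
\[
\norm{\hat x - x}_2 \leq \norm{x_K^{(l^*)} - x_K^{(l')}}_2 + \norm{x_K^{(l')} - x}_2 \leq 2\sqrt{\epsilon}\norm{x_0-x}_2 + \sqrt{\epsilon}\norm{x_0-x}_2,
\]
yielding $\norm{\hat x - x}_2^2 \leq 9\epsilon\norm{x_0-x}_2^2$. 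The only step of genuine substance is the Hoeffding bound on $S$; everything else is bookkeeping. The one subtlety to watch is that the $L$ trials must use fresh, independent index randomness (they share $A$ and $x_0$) so that, conditional on $(A,x_0)$, the successes of distinct trials really are independent Bernoullis — the assumption $\delta_1+\delta_2 \leq 1/3$ is what pushes the per-trial success probability strictly above $1/2$ and makes the majority-based acceptance rule robust.
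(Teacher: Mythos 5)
Your proof is correct and follows essentially the same route as the paper's: a Chernoff/Hoeffding bound showing a majority of the $L$ conditionally independent trials land within $\sqrt{\epsilon}\norm{x_0-x}_2$ of $x$, followed by the observation that any returned $\hat{x}$ has at least $L/2$ iterates in its acceptance ball and hence (by pigeonhole) at least one successful iterate, giving $\norm{\hat{x}-x}_2 \leq 3\sqrt{\epsilon}\norm{x_0-x}_2$ by two triangle inequalities. Your reading of the acceptance radius as $2\sqrt{\epsilon}\norm{x_0-x}_2$ and your use of the strict inequality $S > L/2$ in the pigeonhole step are in fact slightly more careful than the paper's own write-up, but they do not change the argument.
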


\begin{proof}
	For $1 \leq l \leq L$, let $\chi_l$ be the indicator variable for $\norm{x_K^{(l)}-x}_2^2 \leq \epsilon \norm{x_0-x}_2^2$. Then $\chi_1,\ldots,\chi_L$ are i.i.d. Bernoulli random variables each with success probability at least $2/3$. Let $I$ be the set of indices $l$ for which $\chi_l = 1$. Using a Chernoff bound \cite{Vershynin}, we see that with probability at least $1-e^{-cL}$, $\abs{I} \geq L/2$. Now let $I'$ be the set of indices for which $\abs{B(x_K^{(l)},2\epsilon) \cap \braces{x_K^{(1)},\ldots,x_K^{(L)}}} \geq L/2$. Observe that for all $l,l' \in I$, we have
	\begin{equation*}
	\norm{x_K^{(l)}- x_K^{(l')}}_2 \leq \norm{x_K^{(l)}- x}_2 + \norm{x- x_K^{(l')}}_2 \leq 2\sqrt{\epsilon}.
	\end{equation*}
	This implies that $I \subset I'$, so $I' \neq \emptyset$. Furthermore, for all $l' \in I'$, there is $l \in I$ for which $\norm{x_K^{(l)}-x_K^{(l')}}_2 \leq 2\sqrt{\epsilon}$. As such, we have
	\begin{equation*}
	\norm{x_K^{(l')}-x}_2 \leq \norm{x_K^{(l')}- x_K^{(l)}}_2 + \norm{x_K^{(l)}-x}_2 \leq 3\sqrt{\epsilon}.
	\end{equation*}
	Now, observe that the estimate $\hat{x}$ returned by Algorithm \ref{alg: ensemble RK} is precisely some $x_K^{(l')}$ for which $l' \in I'$. This shows that on the good event, we indeed have $\norm{\hat{x}-x}_2^2 \leq 9\epsilon \norm{x_0-x}_2^2$. By our assumption on $L$, we see that the failure probability is bounded by $\delta'$.
\end{proof}

In practice however, the ensemble method is not required. Numerical experiments show that the randomized Kaczmarz method always eventually converges from any initial estimate.

\section{Extensions}

\subsection{Arbitrary initialization}

In order to obtain a convergence guarantee, we used a truncated spectral initialization to obtain an initial estimate before running randomized Kaczmarz updates. Since the number of steps that we require is only linear in the dimension, and each step requires only linear time, the iteration phase of the algorithm only requires $O(n^2)$ time, and furthermore does not need to see all the data in order to start running.

The spectral initialization on the other hand requires one to see all the data. Forming the matrix from which we obtain the estimate involves adding $m$ rank 1 matrices, and hence naively requires $O(mn^2)$ time. There is hence an incentive to do away with this step altogether, and ask whether the randomized Kaczmarz algorithm works well even if we start from an arbitrary initialization.

We have some numerical evidence that this is indeed true, at least for real Gaussian measurements. Unfortunately, we do not have any theoretical justification for this phenomenon, and it will be interesting to see if any results can be obtained in this direction.

\subsection{Complex Gaussian measurements}

We have proved our main results for measurement systems comprising random vectors drawn independently and uniformly from the sphere, or equivalently, for real Gaussian measurements. These are not the measurement sets that are used in practical applications, which often deal with imaging and hence make use of complex measurements.

While most theoretical guarantees for phase retrieval algorithms are in terms of real Gaussian measurements, some also hold for complex Gaussian measurements, even with identical proofs. This is the case for \textit{PhaseMax} \cite{Cand??s2013} and for \text{Wirtnger flow} \cite{Candes2015}. We believe that a similar situation should hold for the randomized Kaczmarz method, but are not yet able to recalibrate our tools to handle the complex setting.

It is easy to adapt the randomized Kaczmarz update formula \eqref{def: randomized Kaczmarz phase retrieval} itself: we simply replace the sign of $\inprod{a_{r(k)},x_{k-1}}$ with its phase (i.e. $\frac{\inprod{a_{r(k)},x_{k-1}}}{\abs{\inprod{a_{r(k)},x_{k-1}}}}$). Numerical experiments also show that convergence does occur for complex Gaussian measurements (and even CDP measurements) \cite{Wei2015}. Nonetheless, in trying to adapt the proof to this situation, we meet an obstacle at the first step: when computing the error term, we can no longer simply sum up the influence of ``bad measurements'' as we did in Lemma \ref{lem: expected decrement}. Instead, \textit{every} term contributes an error that scales with the phase difference
\begin{equation*}
\frac{\inprod{a_i,z}}{\abs{\inprod{a_i,z}}} - \frac{\inprod{a_i,x}}{\abs{\inprod{a_i,x}}}.
\end{equation*}

We leave it to future work to prove convergence in this setting, whether by adapting our methods, or by proposing completely new ones.

\subsection{Deterministic constructions of measurement sets}

The theory that we have developed in this paper does not apply solely to Gaussian measurements, and generalizes to any measurement sets that satisfy the $\ACW$ condition that we introduced in Section \ref{sec: ACW condition}. It will be interesting to investigate what natural classes of measurement sets satisfy this condition.

\subsection*{Acknowledgments}

Y.T. was partially supported by the Juha Heinonen Memorial Graduate Fellowship at the University of Michigan. R.V. was partially supported by NSF Grant DMS 1265782 and U.S. Air Force Grant FA9550-18-1-0031. Y.T. would like to thank Halyun Jeong for insightful discussions on this topic. We would also like to thank the anonymous reviewers for their many helpful comments.

\nocite{*}
\bibliographystyle{acm}
\bibliography{Projects-Kaczmarz_PR}

\appendix

\section{Growth functions and VC dimension} \label{sec: growth fns and VC dim}

In this section, we define growth functions and VC dimension. We also state some standard results on these topics that we require for our proofs in Section \ref{sec: ACW condition}. We refer the interested reader to \cite{Mohri2012} for a more in-depth exposition on these topics.

Let $\mathcal{X}$ be a set and $\mathcal{C}$ be a family of subsets of $\mathcal{X}$. For a given set $C \in \mathcal{C}$, we slightly abuse notation and identify it with its indicator function $1_C \colon \mathcal{X} \to \braces{0,1}$.

The \textit{growth function} $\Pi_{\mathcal{C}}\colon \mathbb{N} \to \R$ of $\mathcal{C}$ is defined via
\[
\Pi_{\mathcal{C}}(m) := \max_{x_1,\ldots,x_m \in \mathcal{X}} \abs*{\braces*{\paren{C(x_1),C(x_2),\ldots,C(x_m)} ~\colon~ C \in \mathcal{C}}}.
\]

Meanwhile, the \textit{VC dimension} of $\mathcal{C}$ is defined to be the largest integer $m$ for which $\Pi_{\mathcal{C}}(m) = 2^m$. These two concepts are fundamental to statistical learning theory. The key connection between them is given by the Sauer-Shelah lemma.

\begin{lemma}[Sauer-Shelah, Corollary 3.3 in \cite{Mohri2012}] \label{lem: sauer-shelah}
	Let $\mathcal{C}$ be a collection of subsets of VC dimension $d$. Then for all $m \geq d$, have
	\[
	\Pi_{\mathcal{C}}(m) \leq \paren*{\frac{em}{d}}^d.
	\]
\end{lemma}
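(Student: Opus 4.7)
The plan is to establish the sharper combinatorial Sauer--Shelah--Perles bound $\Pi_{\mathcal{C}}(m) \leq \sum_{i=0}^{d} \binom{m}{i}$ first, and then deduce $(em/d)^d$ from an elementary binomial identity. The cleanest route to the combinatorial bound is through Pajor's shattering inequality: for any finite family $\mathcal{F}$ of subsets of a finite ground set $X$, one has $|\mathcal{F}| \leq |\mathrm{Sh}(\mathcal{F})|$, where $\mathrm{Sh}(\mathcal{F})$ denotes the collection of subsets of $X$ shattered by $\mathcal{F}$. Given Pajor's inequality, the Sauer--Shelah bound is immediate: if the VC dimension of $\mathcal{C}$ is $d$, then no subset of any $m$-point sample $S \subseteq \mathcal{X}$ of size larger than $d$ is shattered by the traces $\mathcal{C}|_S$, so $|\mathcal{C}|_S| \leq |\mathrm{Sh}(\mathcal{C}|_S)| \leq \sum_{i=0}^{d}\binom{m}{i}$. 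Taking the supremum over $S$ yields the bound on $\Pi_{\mathcal{C}}(m)$.

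I would prove Pajor's inequality by induction on $|X|$. The base case $|X|=0$ is trivial. For the inductive step, fix any $x \in X$, set $Y = X \setminus \{x\}$, and partition $\mathcal{F}$ into $\mathcal{F}_0 = \{F \in \mathcal{F} : x \notin F\}$ and $\mathcal{F}_1 = \{F \in \mathcal{F} : x \in F\}$. Write $\mathcal{F}_i|_Y$ for their restrictions to $Y$; then $\mathcal{F}_0|_Y$ equals $\mathcal{F}_0$ while $\mathcal{F}_1|_Y$ is $\mathcal{F}_1$ with $x$ stripped, so $|\mathcal{F}| = |\mathcal{F}_0|_Y| + |\mathcal{F}_1|_Y|$. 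The inductive hypothesis bounds each summand by the number of subsets of $Y$ that it shatters. The key geometric observation is that any $S \subseteq Y$ shattered by $\mathcal{F}_i|_Y$ is automatically shattered by $\mathcal{F}$, while any $S \subseteq Y$ shattered simultaneously by both $\mathcal{F}_0|_Y$ and $\mathcal{F}_1|_Y$ has the stronger property that $S \cup \{x\}$ is shattered by $\mathcal{F}$. By counting each shattered subset of $X$ exactly once through either a single family or the ``enlarged'' pair, one sees $|\mathrm{Sh}(\mathcal{F}_0|_Y)| + |\mathrm{Sh}(\mathcal{F}_1|_Y)| \leq |\mathrm{Sh}(\mathcal{F})|$, closing the induction.

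With $\Pi_{\mathcal{C}}(m) \leq \sum_{i=0}^{d} \binom{m}{i}$ in hand, the polynomial upper bound follows from the chain
\[
\paren*{\frac{d}{m}}^{d}\sum_{i=0}^{d}\binom{m}{i} \leq \sum_{i=0}^{d}\binom{m}{i}\paren*{\frac{d}{m}}^{i} \leq \sum_{i=0}^{m}\binom{m}{i}\paren*{\frac{d}{m}}^{i} = \paren*{1+\frac{d}{m}}^{m} \leq e^{d},
\]
where the first inequality uses $m \geq d$ so that $(d/m)^{d-i} \leq 1$ for $i \leq d$. Rearranging gives $\Pi_{\mathcal{C}}(m) \leq (em/d)^{d}$, which is the stated bound.

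The only genuine obstacle is the combinatorial bookkeeping in the inductive step of Pajor's inequality, namely verifying that the shattered subsets of the two traces can be injected into the shattered subsets of $\mathcal{F}$ without double-counting; the rest is routine algebra. No analytic or probabilistic input is required, which is unsurprising since the statement is purely combinatorial.
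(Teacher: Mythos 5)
Your proof is correct. The paper itself does not prove this lemma at all---it is imported verbatim as Corollary~3.3 of the cited reference \cite{Mohri2012}---so any complete argument is already ``more'' than what the authors supply. Your route differs slightly from the standard textbook one: Mohri et al.\ prove the combinatorial bound $\Pi_{\mathcal{C}}(m) \leq \sum_{i=0}^{d}\binom{m}{i}$ by a double induction on $m$ and $d$, splitting the trace on an $m$-point set into the sets that do and do not contain a distinguished point and invoking Pascal's rule, whereas you deduce it from Pajor's shattering inequality $\abs{\mathcal{F}} \leq \abs{\mathrm{Sh}(\mathcal{F})}$, proved by the same one-point decomposition but with the double-counting organized around shattered subsets rather than around the binomial recursion. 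The two arguments are close cousins, but Pajor's formulation is arguably cleaner (no explicit induction on $d$, and it yields the strictly stronger statement that a family always shatters at least as many sets as it has members). Your inductive step is stated correctly: a set shattered by either trace is shattered by $\mathcal{F}$, a set shattered by both traces yields the additional shattered set $S \cup \{x\}$, and the pairing $S \leftrightarrow S\cup\{x\}$ prevents double-counting. The final passage from $\sum_{i=0}^{d}\binom{m}{i}$ to $(em/d)^d$ via multiplying by $(d/m)^d$ and comparing with $(1+d/m)^m \leq e^d$ is exactly the standard estimate (and the one used in \cite{Mohri2012}), and it is where the hypothesis $m \geq d$ enters. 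The only caveat worth recording is the degenerate case $d=0$, where the displayed bound is vacuous as written; this does not affect the paper's use of the lemma, since there $d = n+1 \geq 2$.
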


The reason why we are interested in the growth function of a family of subsets $\mathcal{C}$ is because we have the following guarantee for the uniform convergence for the empirical measures of sets belonging to $\mathcal{C}$.

\begin{proposition}[Uniform deviation, Theorem 2 in \cite{Vapnik1971a}] \label{prop: VC uniform convergence}
	Let $\mathcal{C}$ be a family of subsets of a set $\mathcal{X}$. Let $\mu$ be a probability measure on $\mathcal{X}$, and let $\hat{\mu}_m := \frac{1}{m} \sum_{i=1}^m \delta_{X_i}$ be the empirical measure obtained from $m$ independent copies of a random variable $X$ with distribution $\mu$. For every $u$ such that $m \geq 2/u^2$, the following deviation inequality holds:
	\begin{equation}
	\P\paren{\sup_{C \in \mathcal{C}} \abs{\hat{\mu}_m(C) - \sigma(C)} \geq u } \leq 4\Pi_{\mathcal{C}}(2m)\exp(-mu^2/16).
	\end{equation}
\end{proposition}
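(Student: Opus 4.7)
The plan is to follow the classical three-step symmetrization argument of Vapnik and Chervonenkis, which reduces the supremum over the (potentially infinite) class $\mathcal{C}$ to a maximum over finitely many realizations of binary vectors and then applies a Hoeffding-type tail bound.

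\textbf{Step 1 (Symmetrization by a ghost sample).} Introduce an independent ghost sample $X_1',\ldots,X_m'$ drawn i.i.d.\ from $\mu$, with empirical measure $\hat{\mu}_m' := \tfrac{1}{m}\sum_{i=1}^m \delta_{X_i'}$. I would first prove the reduction
\[
\P\!\left(\sup_{C\in\mathcal{C}}|\hat{\mu}_m(C)-\mu(C)|\geq u\right)\leq 2\,\P\!\left(\sup_{C\in\mathcal{C}}|\hat{\mu}_m(C)-\hat{\mu}_m'(C)|\geq u/2\right).
\]
The idea is to fix any (random) set $C^\star\in\mathcal{C}$ achieving the supremum for $\hat{\mu}_m$. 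If the ghost sample satisfies $|\hat{\mu}_m'(C^\star)-\mu(C^\star)|<u/2$, then the triangle inequality gives $|\hat{\mu}_m(C^\star)-\hat{\mu}_m'(C^\star)|\geq u/2$. Since $1_{C^\star}(X_i')$ has variance at most $1/4$, Chebyshev's inequality yields $\P(|\hat{\mu}_m'(C^\star)-\mu(C^\star)|\geq u/2\mid X_1,\ldots,X_m)\leq 1/(mu^2)\leq 1/2$ precisely under the stated hypothesis $m\geq 2/u^2$, which gives the factor of $2$.

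\textbf{Step 2 (Randomization with Rademacher signs).} Let $\varepsilon_1,\ldots,\varepsilon_m$ be i.i.d.\ $\pm 1$ Rademacher variables independent of everything else. Because the pairs $(X_i,X_i')$ are exchangeable, the process $\{\hat{\mu}_m(C)-\hat{\mu}_m'(C)\}_{C\in\mathcal{C}}$ has the same joint distribution as
\[
\left\{\tfrac{1}{m}\sum_{i=1}^m \varepsilon_i\bigl(1_C(X_i)-1_C(X_i')\bigr)\right\}_{C\in\mathcal{C}}.
\]
This is the essential step that allows the randomness to be reduced to Rademacher signs conditional on the data.

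\textbf{Step 3 (Conditioning and counting via the growth function).} I would then condition on the combined sample $\mathbf{Z}=(X_1,\ldots,X_m,X_1',\ldots,X_m')$. By the very definition of the growth function, the set of binary vectors $\{(1_C(Z_1),\ldots,1_C(Z_{2m})):C\in\mathcal{C}\}$ has cardinality at most $\Pi_{\mathcal{C}}(2m)$. Thus the conditional supremum over $C$ reduces to a maximum over at most $\Pi_{\mathcal{C}}(2m)$ Rademacher linear combinations. For each such fixed binary vector $v\in\{0,1\}^{2m}$, Hoeffding's inequality gives
\[
\P\!\left(\Bigl|\tfrac{1}{m}\sum_{i=1}^m \varepsilon_i(v_i-v_{m+i})\Bigr|\geq u/2 \,\Big|\, \mathbf{Z}\right)\leq 2\exp(-mu^2/8),
\]
since the summands are bounded by $1$ in absolute value. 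A union bound over the $\Pi_{\mathcal{C}}(2m)$ choices, followed by integrating out the conditioning on $\mathbf{Z}$ and combining with the factor of $2$ from Step~1, produces a bound of the stated form $4\Pi_{\mathcal{C}}(2m)\exp(-mu^2/16)$ (the exponent degrades by a factor of two between Hoeffding and the quoted constant, which is consistent with accounting for the halving of $u$ in Step~1).

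\textbf{Main obstacle.} The genuinely delicate point is the symmetrization in Step~1: it is the only place where the hypothesis $m\geq 2/u^2$ is used, and getting it right requires carefully handling the random choice of $C^\star$ (which depends on $\hat{\mu}_m$) so that the ghost-sample event is independent of that choice when we condition on the original data. The remaining steps are essentially bookkeeping once the right exchangeability and conditioning framework is in place.
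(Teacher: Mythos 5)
Your proposal is correct: it is the classical Vapnik--Chervonenkis argument (ghost-sample symmetrization with Chebyshev supplying the factor $2$ under $m\geq 2/u^2$, Rademacher randomization via exchangeability, then conditioning on the combined sample and union-bounding over the at most $\Pi_{\mathcal{C}}(2m)$ realized binary patterns with Hoeffding). The paper does not prove this proposition at all --- it is quoted verbatim as Theorem 2 of the cited Vapnik--Chervonenkis reference --- so there is no internal proof to compare against; your reconstruction matches the standard one, and your Hoeffding step in fact yields the stronger exponent $\exp(-mu^2/8)$, which implies the stated $\exp(-mu^2/16)$. The only point you would need to tidy in a fully rigorous write-up is the measurable selection of $C^\star$ (or a reduction to a countable subclass), which you correctly flag as the delicate spot.
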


We now state and prove two simple claims.

\begin{claim} \label{clm: VC dim for hemispheres}
	Let $\mathcal{C}$ be the collection of all hemispheres in $S^{n-1}$. Then the VC dimension of $\mathcal{C}$ is bounded from above by $n+1$.
\end{claim}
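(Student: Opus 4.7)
The plan is to bound the VC dimension of the hemisphere class by embedding hemispheres into a well-studied class of subsets of $\R^n$ and invoking a standard VC estimate.

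First I would observe that every hemisphere in $S^{n-1}$ has the form
\begin{equation*}
H_w := \{v \in S^{n-1} : \langle w, v \rangle \geq 0\}, \quad w \in \R^n \setminus \{0\},
\end{equation*}
so $H_w = S^{n-1} \cap \tilde H_w$, where $\tilde H_w = \{x \in \R^n : \langle w, x \rangle \geq 0\}$ is a closed half-space of $\R^n$. Viewing the sphere as a subset of $\R^n$ thus exhibits $\mathcal{C}$ as the restriction of the class of closed half-spaces of $\R^n$ to $S^{n-1}$.

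Next I would argue that any subset of $S^{n-1}$ shattered by $\mathcal{C}$ is, as a subset of $\R^n$, shattered by the class of closed half-spaces. This is immediate since the dichotomy of $v_1, \ldots, v_m \in S^{n-1}$ induced by a hemisphere $H_w$ coincides with the dichotomy induced by the ambient half-space $\tilde H_w$. Consequently,
\begin{equation*}
\mathrm{VCdim}(\mathcal{C}) \leq \mathrm{VCdim}(\{\text{closed half-spaces of } \R^n\}).
\end{equation*}

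Finally, I would invoke the classical fact (easily proved via Radon's theorem: any $n+2$ points in $\R^n$ admit a bipartition whose convex hulls intersect, ruling out at least one dichotomy) that the class of closed half-spaces in $\R^n$ has VC dimension exactly $n+1$. This yields $\mathrm{VCdim}(\mathcal{C}) \leq n+1$, as claimed.

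There is essentially no obstacle in this argument; the only mildly delicate point is the boundary case where $\langle w, v_i \rangle = 0$ for some $i$, which can be handled either by convention (choosing $\geq 0$ versus $>0$) or by a small perturbation of $w$ that preserves all strict sign patterns without altering the VC bound.
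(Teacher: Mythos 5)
Your proposal is correct and follows essentially the same route as the paper: restrict the class of half-spaces of $\R^n$ to the sphere, note that shattering by hemispheres implies shattering by half-spaces, and invoke the standard fact that half-spaces in $\R^n$ have VC dimension $n+1$. You merely supply more detail (the Radon's theorem justification and the boundary-case remark) than the paper's two-line argument.
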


\begin{proof}
	It is a standard fact from statistical learning theory \cite{Mohri2012} that the VC dimension of half-spaces in $\R^n$ is $n+1$. Since $S^{n-1}$ is a subset of $\R^n$, the claim follows by the definition of VC dimension.
\end{proof}

\begin{claim} \label{clm: growth function bound for symmetric difference}
	Let $\mathcal{C}$ and $\mathcal{D}$ be two collections of functions from a set $\mathcal{X}$ to $\braces{0,1}$. Using $\triangle$ to denote symmetric difference, we define
	\begin{equation} \label{def: symmetric difference}
	\mathcal{C}\triangle\mathcal{D} := \braces{C\triangle D \st C \in \mathcal{C}, D \in \mathcal{D}}.
	\end{equation}
	Then the growth function $\Pi_{\mathcal{C}\triangle\mathcal{D}}$ of $\mathcal{C}\triangle\mathcal{D}$ satisfies $\Pi_{\mathcal{C}\triangle\mathcal{D}}(m) \leq \Pi_{\mathcal{C}}(m)\cdot \Pi_{\mathcal{D}}(m)$ for all $m \in \Z_+$.
\end{claim}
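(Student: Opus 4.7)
The plan is to observe that the indicator function of the symmetric difference can be written as the XOR (equivalently, sum mod $2$) of the individual indicator functions, so the ``trace'' of $C \triangle D$ on a finite point set is completely determined by the separate traces of $C$ and $D$ on that set. This reduces the counting problem for $\mathcal{C} \triangle \mathcal{D}$ to a product count over $\mathcal{C}$ and $\mathcal{D}$.

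Concretely, I would fix an arbitrary tuple $x_1, \ldots, x_m \in \mathcal{X}$ and define the trace map $\rho \colon 2^{\mathcal{X}} \to \{0,1\}^m$ by $\rho(E) = (1_E(x_1), \ldots, 1_E(x_m))$. Then for any $C \in \mathcal{C}$ and $D \in \mathcal{D}$, the identity $1_{C \triangle D}(x) = 1_C(x) \oplus 1_D(x)$ gives $\rho(C \triangle D) = \rho(C) \oplus \rho(D)$, where $\oplus$ denotes coordinatewise XOR on $\{0,1\}^m$. Hence the map $(C, D) \mapsto \rho(C \triangle D)$ factors through $(C, D) \mapsto (\rho(C), \rho(D))$, so
\begin{equation*}
\bigl|\{\rho(C \triangle D) : C \in \mathcal{C}, D \in \mathcal{D}\}\bigr| \leq \bigl|\{\rho(C) : C \in \mathcal{C}\}\bigr| \cdot \bigl|\{\rho(D) : D \in \mathcal{D}\}\bigr|.
\end{equation*}
By definition of the growth function, the right side is at most $\Pi_{\mathcal{C}}(m) \cdot \Pi_{\mathcal{D}}(m)$. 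Taking the maximum over $x_1, \ldots, x_m$ on the left side yields $\Pi_{\mathcal{C}\triangle\mathcal{D}}(m) \leq \Pi_{\mathcal{C}}(m) \cdot \Pi_{\mathcal{D}}(m)$.

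There is really no obstacle here: the entire content of the claim is the observation that XOR is a pointwise operation, so the trace of a symmetric difference depends only on the two individual traces. The inequality is then a one-line pigeonhole bound. The only minor subtlety to mention is that the bound is taken before maximizing over $x_1, \ldots, x_m$, which is why we get a clean product and not something worse.
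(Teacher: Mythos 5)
Your proof is correct and is essentially the same as the paper's: both arguments observe that the trace of $C \triangle D$ on any fixed point set is the pointwise symmetric difference (XOR) of the traces of $C$ and $D$, so the number of realizable configurations is bounded by the number of pairs of configurations, i.e.\ $\Pi_{\mathcal{C}}(m)\cdot\Pi_{\mathcal{D}}(m)$.
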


\begin{proof}
	Fix $m$, and points $x_1,\ldots,x_m \in \mathcal{X}$. Then every possible configuration $\paren{f(x_1),f(x_2),\ldots,f(x_m)}$ arising from some $f \in \mathcal{C}\triangle\mathcal{D}$ is the point-wise symmetric difference
	\begin{equation*}
	\paren{f(x_1),f(x_2),\ldots,f(x_m)} = \paren{C(x_1),C(x_2),\ldots,C(x_m)}\triangle \paren{D(x_1),D(x_2),\ldots,D(x_m)}
	\end{equation*}
	of configurations arising from some $C \in \mathcal{C}$ and $D \in \mathcal{D}$. By the definition of growth functions, there are at most $\Pi_{\mathcal{C}}(m)\cdot \Pi_{\mathcal{D}}(m)$ pairs of these configurations, from which the bound follows.
\end{proof}

\begin{remark}
	There is an extensive literature on how to bound the VC dimension of concept classes that arise from finite intersections or unions of those from a known collection of concept classes, each of which has bounded VC dimension. We won't require this much sophistication here, and refer the reader to \cite{Blumer1989} for more details.
\end{remark}

\section{Initialization} \label{sec: initialization}

Several different schemes have been proposed for obtaining initial estimates for \textit{PhaseMax} and gradient descent methods for phase retrieval. Surprisingly, these are all spectral in nature: the initial estimate $x_0$ is obtained as the leading eigenvector to a matrix that is constructed out of the sampling vectors $a_1,\ldots,a_m$ and their associated measurements $b_1,\ldots,b_m$ \cite{Candes2015,Chen2015,Zhang2016,Wang2017}.

There seems to be empirical evidence, at least for Gaussian measurements, that the best performing method is the \textit{orthogonality-promoting method} of \cite{Wang2017}. Nonetheless, for any given relative error tolerance, all the methods seem to require sample complexity of the same order. Hence, we focus on the \textit{truncated spectral method} of \cite{Chen2015} for expositional clarity, and refer the reader to the respective papers on the other methods for more details.

The truncated spectral method initializes $x_0 := \lambda_0 \tilde{x}_0$, where $\lambda_0 = \sqrt{\frac{1}{m}\sum_{i=1}^mb_i^2}$, and $\tilde{x}_0$ is the leading eigenvector of
\begin{equation*}
Y = \frac{1}{m}\sum_{i=1}^{m}b_i^2a_ia_i^T1(b_i \leq 3\lambda_0).
\end{equation*}
Note that when constructing $Y$, we sum up only those sampling vectors whose corresponding measurements satisfy $b_i \leq 3\lambda_0$. The point of this is to remove the influence of unduly large measurements, and allow for good concentration estimates, as we shall soon demonstrate.

Suppose from now on that the $a_i$'s are independent standard Gaussian vectors. In \cite{Chen2015}, the authors prove that with probability at least $1-\exp(-\Omega(m))$, we have $\norm{\tilde{x}_0 - x}_2 \leq \epsilon \norm{x}_2$ for any fixed relative error tolerance $\epsilon$ (see their Proposition 3). They do not, however, examine the dependence of the probability bound on $\epsilon$. Nonetheless, by examining the proof more carefully, we can make this dependence explicit. In doing so, we obtain the following proposition.

\begin{proposition}[Relative error guarantee for initialization] \label{prop: initialization guarantee}
	Let $a_1,\ldots,a_m$, $b_1,\ldots,b_m$, $Y$ and $x_0$ be defined as in the preceding discussion. Fix $\epsilon > 0$ and $0 < \delta < 1$. Then with probability at least $1-\delta$, we have $\norm{x_0-x}_2 \leq \epsilon \norm{x}_2$ so long as $m \geq C\paren{\log(1/\delta)+n}/\epsilon^2$.
\end{proposition}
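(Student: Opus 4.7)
The plan is to follow the standard template for analyzing truncated spectral initialization, as originated by Chen and Cand\`es, keeping careful track of how the probability bound scales with $\epsilon$. By homogeneity, I would first reduce to the case $\norm{x}_2 = 1$. Since $b_i^2 = \inprod{a_i,x}^2$ is a weighted sum of $\chi^2$ variables, a standard Bernstein/Hanson-Wright bound shows that $\lambda_0^2 = \frac{1}{m}\sum b_i^2$ concentrates around $1$: one obtains $\abs{\lambda_0 - 1} \leq \epsilon/3$ with probability at least $1-\delta/3$ provided $m \gtrsim \log(1/\delta)/\epsilon^2$. This controls the scalar factor and simultaneously lets us replace the data-dependent truncation threshold $3\lambda_0$ by a fixed threshold $3$ in the sequel, at the cost of an adjustment that is absorbed into the final constants.

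Next I would analyze the matrix $\tilde{Y} := \frac{1}{m}\sum_i b_i^2 a_i a_i^T 1(b_i \leq 3)$. Using rotational symmetry and writing $a_i = \inprod{a_i,x}x + a_i^\perp$ with $a_i^\perp \sim \N{0}{I-xx^T}$ independent of $\inprod{a_i,x}$, one gets the closed form
\begin{equation*}
\E \tilde{Y} = \beta_1 \, xx^T + \beta_2 (I - xx^T), \qquad \beta_1 = \E[g^4 1(|g| \leq 3)], \quad \beta_2 = \E[g^2 1(|g| \leq 3)],
\end{equation*}
with $g \sim \N{0}{1}$. Here $\beta_1 > \beta_2 > 0$ are absolute constants, so $\E\tilde{Y}$ has a spectral gap $\beta_1 - \beta_2 =: c_0 > 0$ between its top eigenvector $x$ and the rest of the spectrum.

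The main technical step is the concentration $\norm{\tilde Y - \E\tilde Y} \lesssim \epsilon$ with probability at least $1-\delta/3$ under $m \gtrsim (n+\log(1/\delta))/\epsilon^2$. The truncation $b_i \leq 3$ is precisely what makes this work: each summand has spectral norm bounded by $\frac{9}{m}\norm{a_i}_2^2 \lesssim n/m$ (on the typical event $\norm{a_i}_2^2 \lesssim n$), which is the sub-exponential regime where a matrix Bernstein inequality yields the desired $\sqrt{(n+\log(1/\delta))/m}$ rate. Equivalently, one may derive the bound by a standard $\epsilon$-net argument over the sphere together with a scalar Bernstein bound for the fourth-moment-like variables $\inprod{a_i,v}^2 \inprod{a_i,x}^2 1(\abs{\inprod{a_i,x}} \leq 3)$; this is where I expect the bulk of the technical work and where the tight dependence on $\epsilon$ must be extracted. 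An application of the Davis--Kahan $\sin\Theta$ theorem then gives $\min_{s\in\{\pm 1\}}\norm{s\tilde x_0 - x}_2 \lesssim \norm{\tilde Y - \E\tilde Y}/c_0 \lesssim \epsilon$.

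Finally I would combine the two estimates: writing $x_0 = \lambda_0 \tilde x_0$ and choosing the sign of $\tilde x_0$ to match $x$, the triangle inequality yields
\begin{equation*}
\norm{x_0 - x}_2 \leq \abs{\lambda_0 - 1}\norm{\tilde x_0}_2 + \norm{\tilde x_0 - x}_2 \lesssim \epsilon,
\end{equation*}
on the intersection of the three good events, whose total failure probability is at most $\delta$ by a union bound. Rescaling the universal constant in the lower bound on $m$ absorbs the implicit constants, yielding the stated guarantee.
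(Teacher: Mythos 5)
Your proposal follows essentially the same route as the paper, which itself just tracks the $\epsilon$-dependence through Chen and Cand\`es's proof: reduce to $\norm{x}_2=1$ by homogeneity, control the data-dependent truncation threshold via concentration of $\lambda_0$, establish $\norm{Y-\beta_1 xx^T-\beta_2(I-xx^T)}\lesssim\epsilon$ under $m\gtrsim(n+\log(1/\delta))/\epsilon^2$, and finish with Davis--Kahan. The only cosmetic difference is that the paper handles the random threshold $3\lambda_0$ by sandwiching between two deterministic thresholds $3\pm c\epsilon$ (the matrices $Y_1,Y_2$) rather than replacing it outright by $3$, which is the rigorous version of the ``adjustment absorbed into the constants'' you allude to; your explicit final triangle inequality for $x_0=\lambda_0\tilde x_0$ is a step the paper leaves implicit.
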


\begin{proof}
	We simply make the following observations while following the proof of Proposition 3 in \cite{Chen2015}. First, since all quantities are 2-homogeneous in $\norm{x}_2$, we may assume without loss of generality that $\norm{x}_2 = 1$. Next, there is some absolute constant $c$ such that if we define $Y_1$ and $Y_2$ by choosing $\gamma_1 = 3+ c\epsilon$, $\gamma_2 = 3 - c\epsilon$, we have the bound $\norm{\E Y_1 - \E Y_2} \leq C\epsilon$. Note also that the deviation estimates $\norm{Y_1-\E Y_1}$, $\norm{Y_2-\E Y_2}$ are bounded by $C\epsilon$ given our assumptions on $m$. This implies that with high probability,
	\begin{equation*}
	\norm{Y-\beta_1 xx^T - \beta_2 I_n} \leq C\epsilon.
	\end{equation*}
	Adjust our constants so that $C$ in the last equation is bounded by $\beta_1 - \beta_2$. We may then apply Davis-Kahan \cite{Davis1970a} to get
	\begin{equation*}
	\norm{\tilde{x}_0 - x}_2 \leq \frac{\norm{Y-\beta_1 xx^T - \beta_2 I_n}}{\beta_1 - \beta_2} \leq \epsilon
	\end{equation*}
	as we wanted.
\end{proof}

By examining the proof carefully, the astute reader will observe that the crucial properties that we used were the rotational invariance of the $a_i$'s (to compute the formulas for $\E Y_1$ and $\E Y_2$) and their subgaussian tails (to derive the deviation estimates). These properties also hold for sampling vectors that are uniformly distributed on the sphere. As such, a more lengthly and tedious calculation can be done to show that the guarantee also holds for such sampling vectors. If the reader has any residual doubt, perhaps this can be assuaged by noting that a uniform sampling vector and its associated measurement $(a_i,b_i)$ can be turned into an honest real Gaussian vector by multiplying both quantities by an independent $\chi^2$ random variable with $n$ degrees of freedom.

%
%
%

\end{document}